\documentclass[onefignum,onetabnum]{siamart250211}

\usepackage{amsmath,amssymb,amsfonts}

\usepackage{hyperref}
\usepackage{graphicx}
\usepackage{epstopdf}
\usepackage{mathtools}

\ifpdf
  \DeclareGraphicsExtensions{.eps,.pdf,.png,.jpg}
\else
  \DeclareGraphicsExtensions{.eps}
\fi

\usepackage[top=3.375cm,
            bottom=3.375cm,
            inner=4.275cm,
            outer=4.275cm]{geometry}

\newsiamremark{remark}{Remark}
\newsiamremark{example}{Example}

\headers{}{D. S. Winterrose}

\title{High-Order $L^2$-Galerkin Schemes \\ for the Decoupled Potential Integral Equations}

\author{David Scott Winterrose \thanks{DTU, Kgs. Lyngby (\email{dswint@dtu.dk}).}}

\usepackage{subcaption}
\usepackage{caption}
\usepackage{amsopn}
\usepackage{mathrsfs}
\usepackage{pgfplots}
\pgfplotsset{compat=1.18}

\renewcommand{\figurename}{Figure}

\ifpdf
\hypersetup{
  pdftitle = {High-Order Galerkin DPIE},
  pdfauthor = {David Scott Winterrose}
}
\fi

\begin{document}

\maketitle

\begin{abstract}
We prove Sobolev space well-posedness of the decoupled potential integral equations.
Using basic pseudo-differential techniques, discrete stability is obtained for the standard $L^2$-pairing, 
and we deduce spectral convergence for a high-order scheme employing discontinuous basis functions, 
or with additional element-wise tangential continuity imposed in the vectorial part of the system. 
The scheme is shown to be much better conditioned than a mature commercial EFIE/CFIE solver. 
In many cases appearing almost independent of wavenumber, mesh density, and the order.
\end{abstract}

\begin{keywords}
  Decoupled Potential; Galerkin; Pseudo-Differential Equations.
\end{keywords}

\begin{AMS}
  65N38, 65R20, 65N30
\end{AMS}

\section{Introduction} 
The standard EFIE/MFIE/CFIE are plagued by drawbacks.
Namely, the resonant frequencies in EFIE/MFIE, dense mesh breakdown in EFIE, 
and potentially catastrophic cancellations in MFIE as the wavenumber $k$ goes to zero.
Moreover, if the genus is non-zero, all suffer from topological breakdown in this limit.
At $k=0$ the kernel of EFIE consists of all the harmonic vector fields on the scatterer, 
and MFIE, in turn, has kernel consisting of all the harmonic Neumann vector fields.
But this results in ill-conditioned systems on the operator level, before discretization, 
and no trick post-discretization can overcome this flaw in the equations themselves.\\

The decoupled potential integral equations (DPIE) \cite{VicoGreengard2016, VicoGreengard2025} avoid these problems.
But their general functional analytic theory (as in \cite{Buffa2003, Hsiao1997}) has not yet been developed.
At the time of writing, Galerkin boundary element method implementations do exist, 
but only at low order \cite{Baumann2023, Baumann2022, LiShanker2019}, using roof-top Rao-Wilton-Glisson basis functions, 
whereas a lot of effort seems to have focused on high-order Nystr\"om solvers \cite{VicoFerrando2015, VicoGreengard2025}.
The DPIE system is essentially just a copy of the modified integral equations in \cite{ColtonKress2013}, but augmented with a zero-mean constraint to kill the kernel in the static limit $k=0$.
This is analogous to adding a gauge-invariant consistency condition as is done in \cite{EpsteinGreengard2013},
and is known well-posed in the framework of H\"older spaces \cite{VicoGreengard2016, VicoGreengard2025} at operator level.
Related to the DPIE system are the decoupled field integral equations (DFIE) of \cite{VicoGreengard2018}, 
which solve the transmission problem (as in \cite{Baumann2022, LiShanker2019}, but the equations are different). 
Again the authors work in the H\"older spaces, and make no mention of Sobolev spaces.
Nevertheless, these advances suggest potential for broad application. \\

Our objective is to prove well-posedness in the Sobolev scale for smooth surfaces.
The physics will be constrained to closed perfect electrical conductors, as in \cite{VicoGreengard2016, VicoGreengard2025},
and our main results are Theorems~\ref{theorem:DPIE-well-posed} and \ref{theorem:DPIE-galerkin-stable}. 
We then discretize in the $L^2$-pairing,
but construct a scheme using \textit{higher-order} tensor product Legendre basis functions.
Upon integrating the $L^2$-pairing by parts, we are left with only weakly singular kernels, 
but pay with residual line integrals, which must either be canceled out or be computed.
It turns out to be advantageous to enforce tangential continuity in the vector basis,
which is precisely the right constraint that cancels all the line integral contributions.
On the other hand, a discontinuous higher-order basis can be made fully orthogonal,
and, as we shall observe, results in both mesh and order independent conditioning.
Nevertheless, both approaches produce well-conditioned system matrices.



\newpage
In this article, $k\geq 0$ is the wavenumber of an unbounded homogeneous medium.
The scatterer subset is open, bounded, and has a smooth oriented boundary $\Gamma\subset \mathbb{R}^3$.
It is allowed to have $N\geq 1$ components $\Gamma = \sqcup_{j=1}^N \Gamma_j $, disjoint and multiply connected.
Associated to these surfaces is $\nu : \Gamma \to \mathbb{R}^3$, the unit outward normal vector field to $\Gamma$.
The Helmholtz Green's function for $k$ is
\begin{align*}
    \Phi_k : \{ (x,y) \in \mathbb{R}^3 \times \mathbb{R}^3 \, | \, x \neq y\} \to \mathbb{C} : (x,y) \mapsto \frac{1}{4\pi}\frac{e^{ik|x-y|}}{|x-y|}.
\end{align*}
and we write $g$ for the metric on $\Gamma$, pullback of the Euclidean metric by the inclusion.
Given $\rho \in C^\infty(\Gamma)$, we define at any point $x\in \Gamma$ the integrals
\begin{align*}
    S_k \rho(x) &= \int_{\Gamma} \Phi_k(x,y) \rho(y) \, d\sigma(y), \\
    D_k \rho(x) &= \int_{\Gamma} \Big[\frac{\partial}{\partial \nu(y)}\Phi_k(x,y) \Big]\rho(y) \, d\sigma(y),
\end{align*}
and 
\begin{align*}
    D_k^t \rho(x) &= \int_{\Gamma} \Big[\frac{\partial}{\partial \nu(x)}\Phi_k(x,y) \Big]\rho(y) \, d\sigma(y), \\
    W_k \rho(x) &= \lim_{\varepsilon \to 0}\int_{\Gamma\setminus \{ z\in \Gamma \, | \, |z-x|< \varepsilon \}} \Big[ \frac{\partial}{\partial \nu(x)}\frac{\partial}{\partial \nu(y)}\Phi_k(x,y) \Big] \rho(y) \, d \sigma(y),
\end{align*}
where $d\sigma$ is the surface measure, the contraction of Euclidean volume with $\nu$ on $\Gamma$.
These are the single and double layer potentials, as well as the transpose operator $D_k^t$,
and the hyper singular operator $W_k$, written explicitly as a Cauchy principal value.
Given $a\in C^\infty(\Gamma, T\Gamma \otimes_{\mathbb{R}} \mathbb{C} )$, we also define
\begin{align*}
    M_ka(x) = \int_\Gamma \nu(x) \times (\nabla_x \Phi_k(x,y) \times a(y)) \, d \sigma(y),
\end{align*}
whose complete integrand is only weakly singular, hence requires no interpretation.
This is the magnetic operator, which appears in the leading part of the vector DPIE.
We write $A\in \Psi^d(\Gamma; E, F)$ for an order $d\in \mathbb{R}$ classical pseudo-differential operator
\begin{align*}
    A  : C^\infty(\Gamma; E) \to C^\infty(\Gamma; F)
    \quad
    \textnormal{with principal symbol}
    \quad 
    \sigma^d(A),
\end{align*}
which acts on smooth sections of smooth complex vector bundles $E$ and $F$ over $\Gamma$.
An account of their properties is found in \cite{HIII, TaylorPsiDO}, or \cite{AlinhacGerard2007} for a readable introduction.
Moreover, we put $\Psi^d(\Gamma;E) = \Psi^d(\Gamma;E,E)$ and $T_\mathbb{C} \Gamma = T\Gamma \otimes_{\mathbb{R}} \mathbb{C}$ for short.

\begin{theorem}\label{theorem:layer-psido}
The above operators are all classical pseudo-differential operators.
Importantly, their orders are as follows:
\medskip
\begin{enumerate}
    \item $S_k, D_k, D_k^t \in \Psi^{-1}(\Gamma)$.
    \medskip
    \item $W_k \in \Psi^1(\Gamma)$ but $W_k - W_0 \in \Psi^{-1}(\Gamma)$.
    \medskip
    \item $M_k \in \Psi^{-1}(\Gamma, T_\mathbb{C}\Gamma)$.
\end{enumerate}
\medskip
Moreover, the principal symbol of $S_k$ is given by
\begin{align*}
    \sigma^{-1}(S_k)(x,\xi) = \frac{1}{2|\xi|_g} 
    \quad
    \textnormal{for any}
    \quad
    (x,\xi) \in T^* \Gamma\setminus 0.
\end{align*}
\end{theorem}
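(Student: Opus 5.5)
The plan is to reduce everything to one structural fact about the kernels. In local coordinates on $\Gamma$, each kernel admits an expansion of the form $\sum_j a_j(x,y)\,|x-y|^{j-1}$ plus logarithmic-free smooth remainders, with the $a_j$ smooth. Kernels that are homogeneous of degree $-2+m$ in $z=x-y$, modulo smooth factors, give operators in $\Psi^{-m}$.

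We begin by expanding the exponential:
\[
\Phi_k(x,y)=\frac{1}{4\pi|x-y|}+\frac{ik}{4\pi}+\frac{(ik)^2|x-y|}{8\pi}+\dots
\]
Every term after the first is a smooth function times $|x-y|^{j}$ with $j\ge 0$. On a $2$-dimensional manifold such a kernel is a classical pseudo-differential operator of order $-3-j$; the terms with $j$ even and $\ge 0$ have smooth kernels or give classical symbols of order $-3-j$. We will use the standard kernel characterization of classical $\Psi$DOs, as in \cite{TaylorPsiDO, HIII}: a kernel that is conormal to the diagonal, with an asymptotic expansion in functions homogeneous in $z$ in local coordinates, is a classical $\Psi$DO whose order is determined by the degree of homogeneity.

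\textbf{Single layer.} We restrict $|x-y|^{-1}$ to $\Gamma$. In normal coordinates centered at $x$ we have $|x-y|^2=|z|_g^2+O(|z|^3)$, so $|x-y|^{-1}=|z|_g^{-1}(1+O(|z|))$, and the correction terms are homogeneous of degree $\ge 0$ times smooth functions. This gives $S_k\in\Psi^{-1}(\Gamma)$. The principal symbol is the Fourier transform in $z\in\mathbb{R}^2$ of $\frac{1}{4\pi|z|_g}$. Using $\mathcal{F}(|z|^{-1})=2\pi|\xi|^{-1}$ in two dimensions, this equals $\frac{1}{2|\xi|_g}$, as claimed.

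\textbf{Double layer and its transpose.} Here we compute
\[
\partial_{\nu(y)}\Phi_k=\frac{(x-y)\cdot\nu(y)}{4\pi|x-y|^3}(1-ik|x-y|)e^{ik|x-y|}.
\]
The key geometric input is that for smooth $\Gamma$ one has $(x-y)\cdot\nu(y)=O(|x-y|^2)$; explicitly, it is a second fundamental form quadratic in $z$ plus higher-order terms. The kernel is therefore homogeneous of degree $-1$ at leading order, which gives order $-1$. The operator $D_k^t$ is handled in the same way.

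\textbf{Magnetic operator.} We write
\[
\nu(x)\times(\nabla_x\Phi\times a)=\nabla_x\Phi\,(\nu(x)\cdot a(y))-a(y)\,(\nu(x)\cdot\nabla_x\Phi).
\]
For tangential $a$, we have $\nu(x)\cdot a(y)=(\nu(x)-\nu(y))\cdot a(y)=O(|x-y|)$. The second term involves $\nu(x)\cdot(x-y)=O(|x-y|^2)$. Both pieces are therefore kernels homogeneous of degree $-1$ with smooth matrix coefficients, acting on a bundle. After projecting the output to $T\Gamma$, this yields $M_k\in\Psi^{-1}(\Gamma;T_\mathbb{C}\Gamma)$.

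\textbf{Hypersingular operator.} We expect this to be the main obstacle, since the kernel is not integrable and the principal value must be matched with the $\Psi$DO framework. Rather than handle the principal value directly, we will use Maue's identity:
\[
W_k\rho=\operatorname{curl}_\Gamma S_k\operatorname{curl}_\Gamma\rho - k^2\,\nu\cdot S_k(\nu\rho),
\]
valid for smooth $\rho$ (up to sign conventions). The surface curl is a first-order differential operator, $S_k\in\Psi^{-1}$ acting componentwise on vector fields, and multiplication by $\nu$ is of order $0$. Hence $W_k\in\Psi^{1}$. The difference is
\[
W_k-W_0=\operatorname{curl}_\Gamma(S_k-S_0)\operatorname{curl}_\Gamma-k^2\,\nu\cdot S_k\nu.
\]
From the expansion of $\Phi_k$ above, the kernel of $S_k-S_0$ is $\frac{ik}{4\pi}+O(|x-y|)$, which makes $S_k-S_0$ an operator of order $-3$. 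Composing with the two curls gives order $-1$, and the last term is of order $-1$. This yields $W_k-W_0\in\Psi^{-1}$.

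If invoking Maue's identity is deemed insufficient, the fallback is to check directly that the principal value kernel $\frac{1}{4\pi}\big(\nu(x)\cdot\nu(y)\,|x-y|^{-3}-3\,(\nu(x)\cdot(x-y))(\nu(y)\cdot(x-y))\,|x-y|^{-5}\big)$ is homogeneous of degree $-3$ with even leading part. An even leading part of degree $-3$ defines a principal value distribution whose Fourier transform is a classical symbol of order $1$.
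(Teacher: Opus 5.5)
Your main line is correct, but it works on the kernel side where the paper works on the symbol side. The paper fixes $(x,\xi)$, takes a conformal chart centred at $x$, and applies each operator to an oscillatory test function $e^{i\lambda\varphi}\chi$ with $d\varphi(x)=\xi$. It then reads off order and principal symbol from the $\lambda\to\infty$ asymptotics of $e^{-i\lambda\varphi(x)}A(e^{i\lambda\varphi}\chi)(x)$. It relies on the same geometric facts you do: $(x-y)\cdot\nu=O(|x-y|^2)$, and the triple-product splitting $M_k=M_k'-D_k^t$ with $\nu(x)\cdot a(y)=O(|x-y|)$.

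Two things differ. First, the paper takes classicality of the layer operators as well known. Its oscillatory test identifies the symbol of an operator already known to be a $\Psi$DO, but does not prove that it is one. Second, it leaves $W_k$ and $W_k-W_0$ to the reader.

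Your route expands the kernels into terms homogeneous in $z$ and invokes the kernel characterization of classical $\Psi$DOs. This actually establishes classicality and gives every order by homogeneity counting. Your use of Maue's identity settles $W_k\in\Psi^1$ and $W_k-W_0\in\Psi^{-1}$ neatly by reducing to $S_k-S_0\in\Psi^{-3}$, without ever handling the hypersingular kernel. What the paper's computation buys in exchange is explicit principal symbols of $D_k$, $D_k^t$ and $M_k$ in terms of $\mathrm{II}$, $H$ and the Weingarten map; the theorem as stated does not need these. The value $\sigma^{-1}(S_k)=1/(2|\xi|_g)$ comes out the same way in both, from $\mathcal{F}(|z|^{-1})=2\pi/|\xi|$ in two dimensions.

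Two corrections.

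First, by your own rule (degree $-2+m$ gives order $-m$), a kernel $a(x,y)|x-y|^j$ has order $-2-j$, not $-3-j$, and it is smoothing for even $j$. This slip is harmless, since everything stays below order $-1$.

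Second, and more important, discard the fallback for $W_k$. An even kernel homogeneous of degree $-3$ in two dimensions has no symmetric principal value unless its average over the unit circle vanishes. For example, $\int_{\varepsilon<|z|<1}|z|^{-3}\,dz=2\pi(\varepsilon^{-1}-1)$. The leading part of $\partial_{\nu(x)}\partial_{\nu(y)}\Phi_k$ is exactly $1/(4\pi|x-y|^3)$, so the excision limit diverges whenever $\rho(x)\neq 0$; a Hadamard finite part is needed instead. The same caveat applies to the symmetric-excision formula by which the paper defines $W_k$. Your Maue-identity argument is valid for $W_k$ understood as $\partial_\nu D_k$, which is the correct interpretation.
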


\newpage
\section{Invertibility and Stability} 
The DPIE is well-understood in H\"older spaces.
Our aim here is to extend this understanding to the standard Sobolev space scale, 
and show that the resulting pseudo-differential system is actually Fredholm on these.
It is even strongly elliptic, which immediately implies a suitable Gårding inequality. A basic Schatz-type argument then suffices to get discrete stability in the $L^2$-pairing. \\

Let $\chi_{\Gamma_j} : \Gamma \to \{0,1\}$ be the characteristic functions for the $N$ disjoint pieces of $\Gamma$.
Initially, we restrict to $a,f\in C^{\infty}(\Gamma, T_\mathbb{C}\Gamma)$, $\rho, h\in C^{\infty}(\Gamma)$, and $(v_j)_{j=1}^N, (q_j)_{j=1}^N\in \mathbb{C}^N$.
The vector DPIE of \cite{VicoGreengard2016, VicoGreengard2025} is the system of integral equations
\begin{align*}
\Big(
\begin{bmatrix}
    \frac{1}{2}I + M_k & L_k \\
    0 &  \frac{1}{2}I + D_k
\end{bmatrix}
+
i\eta
\begin{bmatrix}
   JS_kJ & \textnormal{curl}_\Gamma S_k  \\
   S_k \textnormal{curl}_\Gamma & -k^2 S_k
\end{bmatrix}
\Big)
\begin{bmatrix}
    a \\
    \rho
\end{bmatrix}
-
\begin{bmatrix}
    0 \\
    \sum_{j=1}^N v_j \chi_{\Gamma_j}
\end{bmatrix}
&=
\begin{bmatrix}
    f \\
    h
\end{bmatrix}, \\
\int_{\Gamma_j} - \nu \cdot S_k (\nu\rho) + i\eta\Big(\nu \cdot S_k Ja -\frac{1}{2}\rho + D^t_k \rho \Big) \, d\sigma &= q_j,
\end{align*}
and the scalar DPIE is the simpler system
\begin{align*}
\Big(
\frac{1}{2}I + D_k
-
i\eta
S_k 
\Big)
\rho
- \sum_{j=1}^N v_j \chi_{\Gamma_j}
&=
h, \\
\int_{\Gamma_j} (W_k - W_0)\rho + i\eta\Big(\frac{1}{2} - D^t_k \Big)\rho \, d\sigma &=  q_j ,
\end{align*}
where $J : C^\infty(\Gamma, T_\mathbb{C}\Gamma) \to C^\infty(\Gamma, T_\mathbb{C}\Gamma) : a \mapsto \nu \times a$ is the almost complex structure.
Notation is overloaded so that the domain of $J$ is replaced by $C^\infty(\Gamma, \mathbb{C}^3)$ when needed.
The value $\eta\in \mathbb{C}$ is free, but controls whether the system is uniquely solvable or not.
Finally, the operator $L_k \in \Psi^{-1}(\Gamma, \mathbb{C}, T_\mathbb{C}\Gamma)$ is simply  
\begin{align*}
    L_k : C^\infty(\Gamma) \to C^\infty(\Gamma, T_\mathbb{C}\Gamma) : \rho \mapsto -JS_k(\nu\rho).
\end{align*}
Once extended to H\"older spaces, the above two systems are well-posed if $\eta \in \mathbb{R}\setminus \{0\}$. 
This is one of the main result of \cite{VicoGreengard2016}.
In our formulation, we have used the identities
\begin{align*}
    \nabla \cdot S_k(\nu \times a) &= S_k (\textnormal{div}_\Gamma (\nu\times a)) = S_k \textnormal{curl}_{\Gamma}(a), \\
    \nu \times \nabla_\Gamma S_k \rho &= \textnormal{curl}_\Gamma S_k \rho,
\end{align*}
where the first is the jump relation \cite[Theorem 2.17]{ColtonKress2013} followed by integration parts.
This only holds because $a$ is assumed smooth, it fails for merely H\"older continuous $a$.
But the right side extends by duality to distributions on $\Gamma$, hence to Sobolev spaces. 
The vector curl is defined $\textnormal{curl}_{\Gamma}(a) = \textnormal{div}_\Gamma(\nu \times a)$, and $\textnormal{div}_\Gamma$ is the surface divergence.
In the following, we write
\begin{align*}
\mathcal{A}
=
\begin{bmatrix}
    \frac{1}{2}I + M_k & L_k \\
    0 &  \frac{1}{2}I + D_k
\end{bmatrix}
+
i\eta
\begin{bmatrix}
   JS_kJ & \textnormal{curl}_\Gamma S_k  \\
   S_k \textnormal{curl}_\Gamma & -k^2 S_k
\end{bmatrix}
:
\begin{matrix}
C^{\infty}(\Gamma, T_\mathbb{C}\Gamma) \\
\oplus \\
C^{\infty}(\Gamma)
\end{matrix}
\to
\begin{matrix}
C^{\infty}(\Gamma, T_\mathbb{C}\Gamma) \\
\oplus \\
C^{\infty}(\Gamma)
\end{matrix}
\end{align*}
and
\begin{align*}
\mathcal{B} = \frac{1}{2}I + D_k
-
i\eta
S_k  
:
C^{\infty}(\Gamma)
\to 
C^{\infty}(\Gamma),
\end{align*}
which are evidently both pseudo-differential of order $0$.

\newpage
\begin{lemma}
$\mathcal{A} \in\Psi^0(\Gamma; T_\mathbb{C} \Gamma \oplus \mathbb{C})$ is an elliptic operator for all $\eta \in \mathbb{C}\setminus \{ -i, +i\}$.
A realization of $\mathcal{A}$ exists for every $s\in \mathbb{R}$ as a Fredholm operator
\begin{align*}
    \mathcal{A}
    : \,
\begin{matrix}
H^{s}(\Gamma, T_\mathbb{C}\Gamma) \\
\oplus \\
H^s(\Gamma)
\end{matrix}
\to
\begin{matrix}
H^{s}(\Gamma, T_\mathbb{C}\Gamma) \\
\oplus \\
H^s(\Gamma)
\end{matrix}
:
\begin{bmatrix}
    a \\
    \rho
\end{bmatrix}
\mapsto
\Big(
\begin{bmatrix}
    \frac{1}{2}I & i\eta \,\textnormal{curl}_\Gamma S_k \\
    i\eta \, S_k \textnormal{curl}_\Gamma &  \frac{1}{2}I
\end{bmatrix}
+
\mathcal{K}
\Big)
\begin{bmatrix}
    a \\
    \rho
\end{bmatrix},
\end{align*}
where $\mathcal{K} \in \Psi^{-1}(\Gamma; T_\mathbb{C}\Gamma \oplus \mathbb{C})$, and $\textnormal{ind}\, \mathcal{A} = 0$ for these $\eta$, but independently of $s\in \mathbb{R}$.
The principal symbol is strongly elliptic for $\eta \in \mathbb{R}$, and is of the form
\begin{align*}
    \sigma^0(\mathcal{A})(x,\xi)
    \begin{bmatrix}
     \delta a \\
     \delta \rho
    \end{bmatrix}
    =
    \begin{bmatrix}
        \frac{1}{2}\delta a - \frac{1}{2}\eta \Big( \nu(x) \times \frac{\xi^\sharp}{|\xi|_g}\Big)\delta \rho \\
        +\frac{1}{2}\eta\Big(\nu(x) \times \frac{\xi^\sharp}{|\xi|_g}\Big)\cdot \delta a + \frac{1}{2} \delta \rho
    \end{bmatrix},
\end{align*}
where $(\delta a, \delta \rho) \in (T_x \Gamma \otimes_\mathbb{R} \mathbb{C}) \oplus \mathbb{C}$ is a complex vector over $(x,\xi) \in T^*_x\Gamma \setminus \{0\}$ for $x\in \Gamma$.
\end{lemma}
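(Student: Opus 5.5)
We start from Theorem~\ref{theorem:layer-psido}, which tells us that $M_k$, $L_k$, $D_k$, $S_k$ and $JS_kJ$ all lie in $\Psi^{-1}$; for $L_k$ and $JS_kJ$ this holds because multiplication by $\nu$, $J$ and the tangential projection are order-zero bundle maps. Only the two off-diagonal terms $i\eta\,\textnormal{curl}_\Gamma S_k$ and $i\eta\, S_k\textnormal{curl}_\Gamma$ are of order $0$, since $\textnormal{curl}_\Gamma$ is a first-order differential operator. The term $-i\eta k^2 S_k$ is also of order $-1$. Collecting all the order $-1$ pieces into $\mathcal{K}$ gives the stated decomposition, and hence $\mathcal{A}\in\Psi^0$.

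Next we compute the principal symbol by the multiplicativity of principal symbols. The surface gradient has symbol $i\xi^\sharp$. Since $\textnormal{curl}_\Gamma\rho = \nu\times\nabla_\Gamma\rho$ on scalars, its symbol is $i\,\nu\times\xi^\sharp$. Since $\textnormal{curl}_\Gamma a = \textnormal{div}_\Gamma(\nu\times a)$ and $\textnormal{div}_\Gamma$ has symbol $\delta a\mapsto i\xi^\sharp\cdot\delta a$, the vector curl has symbol $\delta a\mapsto i\xi^\sharp\cdot(\nu\times\delta a) = -i(\nu\times\xi^\sharp)\cdot\delta a$. Multiplying by $i\eta/(2|\xi|_g)$ from Theorem~\ref{theorem:layer-psido} gives the two off-diagonal entries:
\begin{align*}
\delta\rho &\mapsto -\tfrac{\eta}{2}\,\big(\nu\times \xi^\sharp/|\xi|_g\big)\,\delta\rho, &
\delta a &\mapsto +\tfrac{\eta}{2}\,\big(\nu\times \xi^\sharp/|\xi|_g\big)\cdot\delta a .
\end{align*}
This is the displayed formula. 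Sign conventions for the Fourier symbol and the orientation of $\nu$ need care here, but a consistent convention yields this matrix.

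For ellipticity, write $e=\nu\times\xi^\sharp/|\xi|_g$, a real unit tangent vector. The symbol is $\tfrac12(I+\eta N)$, where $N(\delta a,\delta\rho)=(-e\,\delta\rho,\ e\cdot\delta a)$. $N$ kills the part of $\delta a$ orthogonal to $e$, and on $\textnormal{span}\{(e,0),(0,1)\}$ it acts as the matrix $\begin{bmatrix}0&-1\\1&0\end{bmatrix}$, with eigenvalues $\pm i$. So the symbol has eigenvalues $\tfrac12$ and $\tfrac12(1\pm i\eta)$, and it is invertible exactly when $\eta\neq \pm i$, which proves ellipticity. For $\eta\in\mathbb{R}$, $N$ is real skew-symmetric, so $\textnormal{Re}\langle \sigma^0 w,w\rangle=\tfrac12|w|^2$ and the symbol is strongly elliptic. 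Ellipticity of an order-$0$ classical operator on a compact manifold gives a parametrix, and hence a Fredholm realization $H^s\to H^s$ for every $s$. Elliptic regularity shows that the kernel consists of smooth sections and that the cokernel, identified with the kernel of the adjoint, is smooth as well, so the index is independent of $s$.

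The step I expect to be the main obstacle is the index. I would use a homotopy $t\mapsto \tfrac12 I + t\,i\eta(\text{off-diagonal}) + t\mathcal{K}$ for $t\in[0,1]$. Its symbol has eigenvalues $\tfrac12$ and $\tfrac12(1\pm it\eta)$, which never vanish as long as $t\eta\neq\pm i$ throughout. The homotopy is continuous in operator norm on $H^s$, so the index is constant along it, and at $t=0$ it equals $\textnormal{ind}(\tfrac12 I)=0$. Avoiding $t\eta\in\{\pm i\}$ only matters when $\eta=\pm i/t$ for some $t\in(0,1]$, that is, when $\eta$ is purely imaginary with $|\eta|\ge 1$. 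In that case I would instead connect $\eta$ to $0$ along a path in $\mathbb{C}\setminus\{\pm i\}$, for instance a small arc around $\pm i$, and deform $\mathcal{K}$ linearly at the same time. Ellipticity is preserved along any such path, so $\textnormal{ind}\,\mathcal{A}=0$ for every admissible $\eta$.
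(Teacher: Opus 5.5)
Your proposal is correct and follows essentially the paper's route: the same composition-of-symbols computation for $\textnormal{curl}_\Gamma S_k$ and $S_k\textnormal{curl}_\Gamma$, the same reduction of $\sigma^0(\mathcal{A})$ to $\tfrac12(I+\eta N)$ on $\textnormal{span}\{\xi^\sharp/|\xi|_g,\ \nu\times\xi^\sharp/|\xi|_g\}\oplus\mathbb{C}$, and index zero by homotopy invariance through elliptic (hence Fredholm) operators. The only difference is in the base point of the homotopy. You deform all the way to $\tfrac12 I$, scaling $\mathcal{K}$ as well and detouring around $\pm i$ when needed. The paper instead first gets index zero for small $|\eta|$ from a Neumann series plus a compact perturbation, and then moves $\eta$ within the connected set $\mathbb{C}\setminus\{\pm i\}$. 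Your version also makes explicit the path-avoidance and $s$-independence points that the paper leaves implicit.
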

\begin{proof}
Invoking Theorem~\ref{theorem:layer-psido}, $\mathcal{A}$ is indeed pseudo-differential of the above form. 
First, the off-diagonal terms are
\begin{align*}
    \textnormal{curl}_\Gamma S_k \in \Psi^0(\Gamma; \mathbb{C}, T_\mathbb{C}\Gamma)
    \quad
    \textnormal{and}
    \quad
    S_k \textnormal{curl}_\Gamma \in \Psi^0(\Gamma; T_\mathbb{C}\Gamma, \mathbb{C}),
\end{align*}
where we abuse notation, as the curl operators are different, acting on different spaces.
Using the symbolic calculus, if $(x,\xi)\in T_x^*\Gamma \setminus \{0\}$ we get
\begin{align*}
    \sigma^0(\textnormal{curl}_\Gamma S_k )(x,\xi) \delta \rho 
    &= \sigma^1(\textnormal{curl}_\Gamma)(x,\xi)\sigma^{-1}(S_k)(x,\xi) \delta \rho \\
    &=
    \frac{1}{2}i\Big(\nu(x) \times \frac{\xi^\sharp}{|\xi|_g}\Big) \delta \rho,
\end{align*}
and
\begin{align*}
    \sigma^0( S_k \textnormal{curl}_\Gamma )(x,\xi) \delta a 
    &= \sigma^{-1}(S_k)(x,\xi)  \sigma^1( \textnormal{curl}_\Gamma )(x,\xi) \delta a \\
    &=
    -\frac{1}{2}i\Big(\nu(x) \times \frac{\xi^\sharp}{|\xi|_g} \Big) \cdot \delta a.
\end{align*}
The principal symbol is of the stated form, and it remains to see that it is invertible. 
Note that we have the orthogonal decomposition
\begin{align*}
    T_x\Gamma = \textnormal{span}\Big\{\frac{\xi^\sharp}{|\xi|_g}\Big\} \oplus \textnormal{span}\Big\{\nu(x) \times \frac{\xi^\sharp}{|\xi|_g}\Big\},
\end{align*}
and that, in this basis, $\sigma^0(\mathcal{A})(x,\xi)$ is just
\begin{align*}
\frac{1}{2}
    \begin{bmatrix}
        1 & 0 & 0 \\
        0 & 1 & -\eta \\
        0 & +\eta & 1 
    \end{bmatrix},
\end{align*}
which is clearly invertible for all $\eta \in \mathbb{C}\setminus \{-i, i\}$ and also strongly elliptic for all $\eta \in \mathbb{R}$.
Now provided that $|\eta|$ is sufficiently small, we have
\begin{align*}
\textnormal{ind}\, \mathcal{A}
&=
\textnormal{ind}\,
\begin{bmatrix}
    \frac{1}{2}I & i\eta \,\textnormal{curl}_\Gamma S_k \\
    i\eta \, S_k \textnormal{curl}_\Gamma &  \frac{1}{2}I
\end{bmatrix}\Big(I + \begin{bmatrix}
    \frac{1}{2}I & i\eta \,\textnormal{curl}_\Gamma S_k \\
    i\eta \, S_k \textnormal{curl}_\Gamma &  \frac{1}{2}I
\end{bmatrix}^{-1}\mathcal{K}_k\Big)
= 0.
\end{align*}
But index homotopy invariance then implies $\textnormal{ind}\, \mathcal{A} = 0$ for all $\eta \in \mathbb{C}\setminus \{-i, +i\}$.
\end{proof}

\newpage
An extension of the vector DPIE to the full Sobolev space scale is now possible.
The surface zero-mean constraints extend to bounded functionals
\begin{align*}
(\zeta_\mathcal{A})_j : H^s(\Gamma, T_\mathbb{C}\Gamma \oplus \mathbb{C}) \to \mathbb{C} 
:  (a, \rho) \mapsto \Big\langle \chi_{\Gamma_j}, i \eta ( \nu\cdot S_kJa - \frac{1}{2}\rho +D_k^t\rho) - \nu\cdot S_k (\nu \rho) \Big\rangle,
\end{align*}
where the brackets $H^{-s}(\Gamma) \times H^s(\Gamma) : (\phi, \psi) \mapsto \langle \phi, \psi \rangle$ indicate the $L^2$-duality pairing.
This allows us to write the full vector DPIE operator
\begin{align*}
    \mathcal{V}
        : \,
\begin{matrix}
H^{s}(\Gamma, T_\mathbb{C}\Gamma \oplus \mathbb{C}) \\
\oplus \\
\mathbb{C}^N
\end{matrix}
\to
\begin{matrix}
H^{s}(\Gamma, T_\mathbb{C}\Gamma \oplus \mathbb{C}) \\
\oplus \\
\mathbb{C}^N
\end{matrix}
:
\begin{bmatrix}
    (a, \rho)  \\
    v 
\end{bmatrix}
\mapsto
\begin{bmatrix}
    \mathcal{A} & \tau_\mathcal{A}  \\
    \zeta_\mathcal{A} & 0
\end{bmatrix}
\begin{bmatrix}
    (a,\rho) \\
    v 
\end{bmatrix},
\end{align*}
where
\begin{align*}
    \tau_\mathcal{A} : \mathbb{C}^N \to C^\infty(\Gamma, T_\mathbb{C}\Gamma \oplus \mathbb{C}) : v \mapsto 
    \begin{bmatrix}
        0 \\
        -\sum_{j=1}^N v_j \chi_{\Gamma_j}
    \end{bmatrix},
\end{align*}
and $\zeta_\mathcal{A} = \oplus_{j=1}^N (\zeta_\mathcal{A})_j $ is the combined functional into $\mathbb{C}^N$.

\begin{proposition}\label{proposition:vector-well-posed}
$\mathcal{V}$ is invertible for every $s\in \mathbb{R}$ whenever $\eta \in \mathbb{R}\setminus \{ 0\}$.   
\end{proposition}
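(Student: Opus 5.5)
We reduce invertibility of $\mathcal{V}$ to Fredholm index zero plus injectivity, using the lemma for the Fredholm part and the H\"older-space result of \cite{VicoGreengard2016} for injectivity.

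\emph{Step 1: $\mathcal{V}$ is Fredholm of index zero.} By the preceding lemma, $\mathcal{A}$ is Fredholm of index zero on $H^s(\Gamma, T_\mathbb{C}\Gamma\oplus\mathbb{C})$ for every $s\in\mathbb{R}$ when $\eta\in\mathbb{R}\setminus\{0\}$, since such $\eta$ avoids $\pm i$. Write
\begin{align*}
\mathcal{V} = \begin{bmatrix} \mathcal{A} & 0 \\ 0 & 0 \end{bmatrix} + \begin{bmatrix} 0 & \tau_\mathcal{A} \\ \zeta_\mathcal{A} & 0 \end{bmatrix}.
\end{align*}
The first operator is Fredholm of index $\textnormal{ind}\,\mathcal{A} + (N - N) = 0$: the zero block $\mathbb{C}^N\to\mathbb{C}^N$ adds $N$ to the kernel and $N$ to the cokernel. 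The second operator has finite rank. Its boundedness needs a check: $\tau_\mathcal{A}$ maps into smooth functions since each $\chi_{\Gamma_j}$ is smooth on $\Gamma$ (the components are disjoint). The functionals $(\zeta_\mathcal{A})_j$ are bounded on $H^s$ for every $s$, because they pair $\chi_{\Gamma_j}\in C^\infty$ against order $\le 0$ pseudo-differential images, all of which lie in $H^s$ by Theorem~\ref{theorem:layer-psido}. Hence $\mathcal{V}$ is Fredholm of index $0$ on every $H^s$, and it suffices to prove $\ker\mathcal{V}=\{0\}$.

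\emph{Step 2: reduction to smooth kernel elements.} Suppose $\mathcal{V}((a,\rho),v)=0$ with $(a,\rho)\in H^s$. Then
\begin{align*}
\mathcal{A}(a,\rho) = -\tau_\mathcal{A} v \in C^\infty.
\end{align*}
Since $\mathcal{A}$ is elliptic of order $0$, elliptic regularity (a parametrix) gives $(a,\rho)\in C^\infty$. The kernel is therefore independent of $s$ and consists of smooth sections. On smooth sections, the curl-form of the operators agrees with the original formulation of \cite{VicoGreengard2016}, by the identities for $\nabla\cdot S_k(\nu\times a)$ and $\nu\times\nabla_\Gamma S_k\rho$ quoted above, which are valid precisely because $a$ is smooth. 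So $((a,\rho),v)$ is a H\"older-continuous solution of the homogeneous vector DPIE with the zero-mean constraints.

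\emph{Step 3: injectivity.} We invoke the H\"older-space well-posedness of \cite{VicoGreengard2016} for $\eta\in\mathbb{R}\setminus\{0\}$, which gives $a=0$, $\rho=0$, $v=0$. Then $\mathcal{V}$ is injective with index zero, hence surjective, and bounded invertibility follows from the open mapping theorem.

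The main obstacle is Step 3, if one does not simply cite the H\"older result but wants a self-contained argument. In that case I would build the potentials $\mathbf{A}=\nabla\times S_k a + \dots$ and $\phi$ from $(a,\rho)$ and use the jump relations to show they solve homogeneous exterior problems. Rellich's lemma then makes them vanish outside. The interior fields satisfy an impedance-type boundary condition whose sign, for real nonzero $\eta$, forces them to vanish via a Green's identity energy argument. The jump relations then recover $a$ and $\rho$ as zero. Finally, the zero-mean conditions are used to kill the piecewise-constant ambiguity $v$ associated with the $N$ components, which also handles the static case $k=0$.

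A secondary delicate point is Step 1: the boundedness of $\zeta_\mathcal{A}$ for all $s$. This relies on $\chi_{\Gamma_j}$ being smooth, which holds because the components $\Gamma_j$ are disjoint closed surfaces.
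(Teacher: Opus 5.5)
Your proposal is correct and follows the paper's proof. You show index zero via a finite-rank perturbation of a block-diagonal operator built from $\mathcal{A}$, use elliptic regularity to make kernel elements smooth, and then invoke the uniqueness argument of \cite{VicoGreengard2016}, exactly as the paper does. The only difference is cosmetic: you put a zero block on $\mathbb{C}^N$, which adds $N$ to both kernel and cokernel, whereas the paper puts $I$ there and absorbs $-I$ into the compact perturbation.
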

\begin{proof}
Observe first that $\mathcal{V}$ is again a Fredholm operator, again with $\textnormal{ind}\, \mathcal{V} = 0$.
This is simply because 
\begin{align*}
    \mathcal{V}
    =
\begin{bmatrix}
    \mathcal{A} & 0  \\
    0 & I
\end{bmatrix}
+
\begin{bmatrix}
    0 & \tau_\mathcal{A}  \\
    \zeta_\mathcal{A} & -I
\end{bmatrix},
\end{align*}
and the latter operator is a compact perturbation, so that $\textnormal{ind} \, \mathcal{V} = \textnormal{ind} \, \mathcal{A} = 0 $ holds.
Now if $\mathcal{V} ((a,\rho), v) = 0$, we have
\begin{align*}
    \mathcal{A} (a,\rho) = \Big(0, \sum_{j=1}^N v_j \chi_{\Gamma_j} \Big) \in C^\infty(\Gamma, T_\mathbb{C}\Gamma \oplus \mathbb{C}).
\end{align*}
But $\mathcal{A}$ is elliptic, so $(a, \rho) \in C^\infty(\Gamma, T_\mathbb{C}\Gamma \oplus \mathbb{C})$, hence is a smooth homogeneous solution,
which implies that $((a,\rho), v) = 0$, because we have uniqueness for smooth functions. 
The argument is here found in the proof of \cite[Theorem 3.14]{VicoGreengard2016} or \cite[Theorem 4.42]{ColtonKress2013}.
Consequently, $\mathcal{V}$ is injective of index zero, so it is invertible.
\end{proof}

Obviously, $\mathcal{B} \in \Psi^0(\Gamma)$ is strongly elliptic for all $\eta\in \mathbb{C}$ with symbol $\sigma^0(\mathcal{B})=\frac{1}{2}$.
The associated zero-mean constraints extend similarly to bounded functionals
\begin{align*}
    (\zeta_\mathcal{B})_j : H^s(\Gamma) \to \mathbb{C} : \rho \mapsto \Big\langle \chi_{\Gamma_j}, i\eta\Big(\frac{1}{2} - D^t_k \Big)\rho + (W_k - W_0)\rho  \Big\rangle,
\end{align*}
and the full scalar DPIE operator is Fredholm
\begin{align*}
    \mathcal{S}
        : \,
\begin{matrix}
H^{s}(\Gamma) \\
\oplus \\
\mathbb{C}^N
\end{matrix}
\to
\begin{matrix}
H^{s}(\Gamma) \\
\oplus \\
\mathbb{C}^N
\end{matrix}
:
\begin{bmatrix}
    \rho \\
    v
\end{bmatrix}
\mapsto
\begin{bmatrix}
    \mathcal{B} & \tau_\mathcal{B} \\
    \zeta_\mathcal{B} & 0 
\end{bmatrix}
\begin{bmatrix}
    \rho \\
    v
\end{bmatrix}
\end{align*}
where
\begin{align*}
    \tau_\mathcal{B} : \mathbb{C}^N \to C^\infty(\Gamma) : v \mapsto -\sum_{j=1}^N v_j \chi_{\Gamma_j}
\end{align*}
and again $\zeta_\mathcal{B} = \oplus_{j=1}^N (\zeta_\mathcal{B})_j $.

\newpage
\begin{proposition} \label{proposition:scalar-well-posed}
    $\mathcal{S}$ is invertible for every $s\in \mathbb{R}$ whenever $\eta \in \mathbb{R} \setminus \{ 0 \}$.
\end{proposition}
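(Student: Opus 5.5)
We follow the template of Proposition~\ref{proposition:vector-well-posed}: first establish that $\mathcal{S}$ is Fredholm of index zero on $H^s(\Gamma)\oplus\mathbb{C}^N$ for every $s\in\mathbb{R}$, then prove injectivity by reducing to the smooth setting, where uniqueness is already known from \cite{VicoGreengard2016}.

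For the Fredholm property we note that $\mathcal{B} = \frac{1}{2}I + (D_k - i\eta S_k)$. By Theorem~\ref{theorem:layer-psido}, $D_k - i\eta S_k \in \Psi^{-1}(\Gamma)$, so it is compact on $H^s(\Gamma)$ for every $s$. Hence $\mathcal{B}$ is Fredholm of index zero, being a compact perturbation of $\frac{1}{2}I$; no homotopy argument is needed here. Next we check that $(\zeta_\mathcal{B})_j$ is bounded on $H^s(\Gamma)$ for each $s$. The point is that $W_k - W_0 \in \Psi^{-1}(\Gamma)$ and $D_k^t\in\Psi^{-1}(\Gamma)$, so the operator inside the pairing maps $H^s$ boundedly into $H^{s}$ (indeed into $H^{s+1}$ up to the identity term). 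Since $\chi_{\Gamma_j}\in C^\infty(\Gamma)$ (the components are disjoint and closed), the pairing with $\chi_{\Gamma_j}$ is continuous on every $H^s$. Likewise $\tau_\mathcal{B}$ has smooth finite-rank range. Writing
\begin{align*}
\mathcal{S} = \begin{bmatrix} \mathcal{B} & 0 \\ 0 & I\end{bmatrix} + \begin{bmatrix} 0 & \tau_\mathcal{B} \\ \zeta_\mathcal{B} & -I\end{bmatrix},
\end{align*}
the second operator has finite rank, hence is compact, and therefore $\operatorname{ind}\mathcal{S} = \operatorname{ind}\mathcal{B} = 0$.

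For injectivity, suppose $\mathcal{S}(\rho,v)=0$ with $\rho\in H^s(\Gamma)$. Then $\mathcal{B}\rho = \sum_j v_j\chi_{\Gamma_j}\in C^\infty(\Gamma)$. Ellipticity of $\mathcal{B}$ (elliptic regularity), or more concretely the bootstrap $\rho = 2\sum_j v_j\chi_{\Gamma_j} - 2(D_k - i\eta S_k)\rho$, which gains one derivative per step, shows that $\rho\in C^\infty(\Gamma)$. Thus $(\rho,v)$ is a smooth solution of the homogeneous scalar DPIE, with the constraint functionals coinciding with the original integral constraints. The uniqueness theorem for the scalar DPIE with $\eta\in\mathbb{R}\setminus\{0\}$ in \cite{VicoGreengard2016} (proved via the interior/exterior Helmholtz problems and the Rellich-type argument of \cite[Theorem 3.41]{ColtonKress2013}) then gives $\rho=0$ and $v=0$. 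An injective Fredholm operator of index zero is invertible, and the bounded inverse follows automatically.

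The main obstacle is this final step. We must make sure that the uniqueness argument in the H\"older setting genuinely applies to smooth densities, with the constraint written using $W_k-W_0$ interpreted as in the smooth case. Since smooth functions lie in every H\"older space and the operators agree on them, this should be a direct citation. The only care needed is to confirm that the hypersingular difference $W_k - W_0$, defined here as a principal value, coincides with the H\"older-space definition used in \cite{VicoGreengard2016}.
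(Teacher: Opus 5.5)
Your proposal is correct and follows essentially the same route as the paper. Index zero comes from $\mathcal{S}$ being a finite-rank perturbation of $\textnormal{diag}(\mathcal{B}, I)$, with $\mathcal{B}$ a compact perturbation of $\frac{1}{2}I$; elliptic regularity then makes any kernel element smooth, and the smooth uniqueness argument of \cite[Theorem 3.13]{VicoGreengard2016} finishes the proof. You also spell out details the paper leaves implicit, all correctly: boundedness of $\zeta_\mathcal{B}$ on every $H^s$, the explicit bootstrap gaining one derivative per step, and the agreement of the constraint functionals with the original integrals on smooth densities.
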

\begin{proof}
The argument is basically the same as for the vector operator, but simpler.
Clearly, $\textnormal{ind}\, \mathcal{S} = 0$ for all $\eta \in \mathbb{C}$, because $\mathcal{B}$, hence $\mathcal{S}$, is a compact perturbation of $I$.
Just as before, if $\mathcal{S}(\rho, v) = 0$, then 
\begin{align*}
    \mathcal{B}\rho = \sum_{j=1}^N v_j \chi_{\Gamma_j} \in C^\infty(\Gamma),
\end{align*}
and $\mathcal{B}$ is elliptic, so $\rho \in C^\infty(\Gamma)$, hence we are back in the realm of smooth functions.
Repeating the argument in \cite[Theorem 3.13]{VicoGreengard2016} shows that $(\rho, v) = 0$. 
\end{proof}

Combining Propositions~\ref{proposition:scalar-well-posed} and \ref{proposition:vector-well-posed}, we conclude:

\begin{theorem}\label{theorem:DPIE-well-posed}
    The DPIE system is well-posed for all $s\in \mathbb{R}$ and $\eta \in \mathbb{R}\setminus \{0\}$.
\end{theorem}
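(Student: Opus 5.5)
The theorem is essentially a corollary of Propositions~\ref{proposition:vector-well-posed} and \ref{proposition:scalar-well-posed}. The plan is first to make precise what ``well-posed'' means here. Fix $s\in\mathbb{R}$ and $\eta\in\mathbb{R}\setminus\{0\}$. Well-posedness then means three things for the operators $\mathcal{V}$ and $\mathcal{S}$ acting on $H^s(\Gamma,T_\mathbb{C}\Gamma\oplus\mathbb{C})\oplus\mathbb{C}^N$ and $H^s(\Gamma)\oplus\mathbb{C}^N$. For every datum $((f,h),q)$, respectively $(h,q)$, there must exist a solution; the solution must be unique; and it must depend continuously on the data. The DPIE system consists of the vector system together with the scalar system, and these are decoupled: the unknowns $((a,\rho),v)$ and $(\rho,v)$ of the two subsystems are independent, and they are linked only through the physical data. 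The full DPIE operator is therefore the block-diagonal operator $\mathcal{V}\oplus\mathcal{S}$ on the direct sum of the two Hilbert spaces.

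The key steps are as follows. First, I invoke Proposition~\ref{proposition:vector-well-posed} to get that $\mathcal{V}$ is a bounded bijection for this $s$ and $\eta$. Second, I invoke Proposition~\ref{proposition:scalar-well-posed} to get the same for $\mathcal{S}$. Third, I note that a direct sum of bounded bijections between Hilbert spaces is again a bounded bijection. Fourth, I apply the bounded inverse theorem, which gives $(\mathcal{V}\oplus\mathcal{S})^{-1}$ bounded, with
\begin{align*}
\|((a,\rho),v)\|+\|(\rho',v')\|\leq C_{s,\eta}\big(\|((f,h),q)\|+\|(h',q')\|\big).
\end{align*}
This is exactly existence, uniqueness and stability in the Sobolev scale. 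Boundedness of $\mathcal{V}$ and $\mathcal{S}$ themselves follows because every entry is either a classical pseudo-differential operator of order $\leq 0$ (by Theorem~\ref{theorem:layer-psido} and the preceding lemma), the finite-rank map $\tau$, or the functional $\zeta$. Each $\zeta_j$ is bounded on $H^s$ because $\chi_{\Gamma_j}$ is smooth, the components being disjoint, and so lies in every $H^{-s}$.

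I should also record a consistency check between scales. For $s<t$ the inverses agree on the smaller space, since the $H^t$ solution is also an $H^s$ solution and uniqueness holds in $H^s$. The solution is therefore independent of the chosen $s$, and smooth data give smooth solutions, which coincide with the classical H\"older solutions of \cite{VicoGreengard2016}.

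The main obstacle does not lie in this combination step, which is formal. It lies in the inputs, namely the uniqueness arguments for smooth homogeneous solutions, which are borrowed from \cite{VicoGreengard2016}. Here I would only check that the elliptic regularity step in each proposition is valid for every real $s$, including very negative $s$, so that a distributional kernel element really is smooth. This follows from the existence of a parametrix for the elliptic operators $\mathcal{A}$ and $\mathcal{B}$, using that the right-hand sides $\sum_j v_j\chi_{\Gamma_j}$ are smooth.
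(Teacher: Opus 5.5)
Your proposal is correct and takes the same route as the paper, which proves the theorem simply by combining Propositions~\ref{proposition:vector-well-posed} and \ref{proposition:scalar-well-posed}; your direct-sum and bounded-inverse-theorem bookkeeping just makes that combination explicit. Your extra remarks are also accurate and fill in details the paper leaves implicit: the boundedness of the $\zeta_j$ via smoothness of $\chi_{\Gamma_j}$, the consistency of solutions across different $s$, and parametrix-based elliptic regularity for very negative $s$.
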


Operators $\mathcal{A}$ and $\mathcal{B}$, being strongly elliptic, admit strong Gårding inequalities.
This statement is a foundational result in the calculus of pseudo-differential operators, and the theorem can be found in \cite{HIII} or \cite{TaylorPsiDO} with more generality than needed here.
In our case, it means that there are constants

\bigskip
\begin{enumerate}
    \item $\alpha_0, \alpha_{-\frac{1}{2}}>0$ such that if $f=(a,\rho) \in L^2(\Gamma, T_\mathbb{C}\Gamma \oplus \mathbb{C})$ then
\begin{align*}
    \textnormal{Re} \, \langle\mathcal{A} f, f\rangle_{L^2(\Gamma, T_\mathbb{C}\Gamma \oplus \mathbb{C})} \geq \alpha_0||f||^2_{L^2(\Gamma, T_\mathbb{C}\Gamma \oplus \mathbb{C})} - \alpha_{-\frac{1}{2}}||f||^2_{H^{-\frac{1}{2}}(\Gamma, T_\mathbb{C}\Gamma \oplus \mathbb{C})},
\end{align*}
    \item $\beta_0, \beta_{-\frac{1}{2}}>0$ such that if $\rho \in L^2(\Gamma)$ then
\begin{align*}
      \textnormal{Re} \, \langle \mathcal{B} \rho, \rho\rangle_{L^2(\Gamma)} \geq \beta_0||\rho||^2_{L^2(\Gamma)} - \beta_{-\frac{1}{2}}||\rho||^2_{H^{-\frac{1}{2}}(\Gamma)},  
\end{align*}
\end{enumerate}

\noindent
which depend on the geometry of $\Gamma$, and $\alpha_{-\frac{1}{2}},\beta_{-\frac{1}{2}}$ also depend on the wavenumber $k$. 
The objective is now to obtain discrete stability in the case $s=0$ in the $L^2$-pairing, and the above two Gårding inequalites 
are going to be instrumental in achieving this.
Of course, discrete stability means that $\mathcal{V}$ and $\mathcal{S}$ satisfy the discrete inf-sup condition.
This is enough to ensure that the discrete solutions converge.\\

Let $\{H_n\}_{n=1}^\infty$ be a sequence of finite-dimensional subspaces of a Hilbert space $H$.
It is said to be a Galerkin sequence if
\begin{align*}
    \inf_{w\in H_n} || u - w || \to 0
    \quad
    \textnormal{as}
    \quad
    n \to \infty
    \quad
    \textnormal{for any}
    \quad
    u\in H,
\end{align*}
and we shall say that the DPIE is $L^2$-Galerkin stable if the inf-sup conditions hold:

\bigskip
\begin{enumerate}
\item If $H = L^2(\Gamma, T_\mathbb{C}\Gamma \oplus \mathbb{C}) \oplus \mathbb{C}^N$, there is $n_0$ such that if $n\geq n_0$ then
\begin{align*}
    \inf_{u\in H_n} \sup_{w\in H_n} \frac{|\langle \mathcal{V} u, w \rangle_{L^2(\Gamma, T_\mathbb{C}\Gamma \oplus \mathbb{C}) \oplus \mathbb{C}^N}|}{||u||_{L^2(\Gamma, T_\mathbb{C}\Gamma \oplus \mathbb{C}) \oplus \mathbb{C}^N} ||w||_{L^2(\Gamma, T_\mathbb{C}\Gamma \oplus \mathbb{C}) \oplus \mathbb{C}^N}} \geq \alpha > 0.
\end{align*}

\item If $H = L^2(\Gamma) \oplus \mathbb{C}^N$, there is $n_0$ such that if $n\geq n_0$ then
\begin{align*}
    \inf_{u\in H_n} \sup_{w\in H_n} \frac{|\langle \mathcal{S} u, w \rangle_{L^2(\Gamma) \oplus \mathbb{C}^N}|}{||u||_{L^2(\Gamma) \oplus \mathbb{C}^N} ||w||_{L^2(\Gamma) \oplus \mathbb{C}^N}} \geq \beta > 0.
\end{align*}
\end{enumerate}

\newpage
\begin{theorem} \label{theorem:DPIE-galerkin-stable}
The DPIE is $L^2$-Galerkin stable if $\eta \in \mathbb{R}\setminus \{ 0\}$.    
\end{theorem}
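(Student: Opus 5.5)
The plan is a Schatz-type argument: combine the Gårding inequalities for $\mathcal{A}$ and $\mathcal{B}$ with the invertibility of $\mathcal{V}$ and $\mathcal{S}$ (Propositions~\ref{proposition:vector-well-posed} and \ref{proposition:scalar-well-posed} at $s=0$), and absorb the finite-rank coupling and the $H^{-\frac{1}{2}}$ defect by a compactness argument. I treat $\mathcal{V}$; the case of $\mathcal{S}$ is identical with $\mathcal{B}$, $\beta_0$, $\beta_{-\frac{1}{2}}$ in place of $\mathcal{A}$, $\alpha_0$, $\alpha_{-\frac{1}{2}}$.

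\textbf{Step 1: Gårding inequality for the full operator.} Write $\mathcal{V} = \mathcal{V}_0 + \mathcal{C}$, where
\begin{align*}
\mathcal{V}_0 = \begin{bmatrix} \mathcal{A} & 0 \\ 0 & I\end{bmatrix}
\quad \textnormal{and} \quad
\mathcal{C} = \begin{bmatrix} 0 & \tau_\mathcal{A} \\ \zeta_\mathcal{A} & -I\end{bmatrix}.
\end{align*}
Here $\mathcal{C}$ is of finite rank, hence compact on $H = L^2(\Gamma, T_\mathbb{C}\Gamma\oplus\mathbb{C})\oplus\mathbb{C}^N$. The Gårding inequality for $\mathcal{A}$ gives, for $u=(f,v)$,
\begin{align*}
\textnormal{Re}\,\langle \mathcal{V}_0 u,u\rangle \geq \min(\alpha_0,1)\|u\|_H^2 - \alpha_{-\frac{1}{2}}\|f\|^2_{H^{-\frac{1}{2}}}.
\end{align*}
The embedding $L^2\hookrightarrow H^{-\frac{1}{2}}$ is compact on the closed surface $\Gamma$. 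Hence there is a compact operator $T$ on $H$, namely $T = \alpha_{-\frac{1}{2}}\Lambda^{-1}$ with $\Lambda$ an order-one elliptic operator on the first component, such that
\begin{align*}
\textnormal{Re}\,\langle (\mathcal{V}_0+T)u,u\rangle \geq c\|u\|_H^2 .
\end{align*}
So $\mathcal{V} = \mathcal{P} + \mathcal{Q}$, with $\mathcal{P} = \mathcal{V}_0 + T$ coercive and $\mathcal{Q} = \mathcal{C} - T$ compact.

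\textbf{Step 2: discrete inf-sup by contradiction.} Suppose the discrete inf-sup condition fails. Then there are $n_m\to\infty$ and $u_m\in H_{n_m}$ with $\|u_m\|=1$ and $\sup_{w\in H_{n_m}} |\langle \mathcal{V}u_m,w\rangle|/\|w\| \to 0$. Pass to a subsequence with $u_m \rightharpoonup u$ weakly, so that $\mathcal{Q}u_m\to\mathcal{Q}u$ strongly.
\begin{itemize}
\item[(a)] $u=0$. For any fixed $z\in H$, pick $z_m\in H_{n_m}$ with $z_m\to z$, using the Galerkin sequence property. Then $\langle \mathcal{V}u_m, z_m\rangle \to 0$, while it also converges to $\langle \mathcal{V}u,z\rangle$. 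Thus $\mathcal{V}u=0$, and invertibility gives $u=0$.
\item[(b)] Coercivity forces $\|u_m\|\to 0$. Since $\mathcal{Q}u_m\to 0$, testing with $w=u_m$ gives
\begin{align*}
c \leq \textnormal{Re}\,\langle \mathcal{P}u_m,u_m\rangle \leq |\langle\mathcal{V}u_m,u_m\rangle| + |\langle \mathcal{Q}u_m,u_m\rangle| \to 0,
\end{align*}
contradicting $\|u_m\|=1$.
\end{itemize}
This yields $n_0$ and $\alpha>0$.

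\textbf{Main obstacle.} The delicate point is Step 1: the lower-order Gårding defect is only controlled in $H^{-\frac{1}{2}}$, and the constraint rows $\zeta_\mathcal{A}$, $\tau_\mathcal{A}$ are not sign-definite. Both difficulties are handled by declaring them compact, which is legitimate on the smooth closed surface. Step 2 then needs only weak convergence and strong convergence of the compact part; the essential input is injectivity of $\mathcal{V}$, hence $\eta\in\mathbb{R}\setminus\{0\}$.
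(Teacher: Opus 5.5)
Your proof is correct. It uses the same ingredients as the paper's: the G{\aa}rding inequality for $\mathcal{A}$, compactness of $L^2\hookrightarrow H^{-\frac{1}{2}}$, finite rank of the constraint rows, injectivity of $\mathcal{V}$ from Proposition~\ref{proposition:vector-well-posed}, and the approximation property of $\{H_n\}$. You organize them in the opposite order, though.

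The paper works with strong limits in $H^{-\frac{1}{2}}\oplus\mathbb{C}^N$:
\begin{itemize}
\item it extracts a subsequence converging strongly there;
\item it combines G{\aa}rding with $\textnormal{Re}\,\langle\mathcal{V}u_n,u_n\rangle\to 0$ and explicit $H^{\frac{1}{2}}$--$H^{-\frac{1}{2}}$ bounds on the $\tau_\mathcal{A}$, $\zeta_\mathcal{A}$ terms to show the limit is nonzero;
\item it then uses boundedness and injectivity of the $s=-\frac{1}{2}$ realization of $\mathcal{V}$ to force the limit to vanish.
\end{itemize}

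You instead split $\mathcal{V}=\mathcal{P}+\mathcal{Q}$ with $\mathcal{P}$ coercive and $\mathcal{Q}$ compact, and work with weak limits in $L^2\oplus\mathbb{C}^N$:
\begin{itemize}
\item injectivity at $s=0$ shows the weak limit is zero;
\item coercivity, together with $\mathcal{Q}u_m\to 0$, then contradicts $\|u_m\|=1$.
\end{itemize}

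The cost of your route is constructing $\Lambda$ as a positive self-adjoint order-one elliptic operator. It must satisfy $\langle\Lambda^{-1}f,f\rangle=\|f\|^2_{H^{-\frac{1}{2}}}$, or be equivalent to it with the constant absorbed into $T$. You state this only loosely, but it is standard.

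What your route buys is an abstract ``coercive plus compact plus injective'' argument that stays entirely in $L^2$. It also makes the subsequence extraction from the failed inf-sup condition explicit, which the paper passes over.
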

\begin{proof}
We prove the discrete inf-sup condition for $\mathcal{V}$.
The proof for $\mathcal{S}$ is the same.
Arguing by contradiction, suppose $\mathcal{V}$ does not satisfy the discrete inf-sup condition. 
In that case, putting $H^s = H^s(\Gamma, T_\mathbb{C}\Gamma \oplus \mathbb{C})$ we can find $u_n \in H_n$ such that
\begin{align*}
||u_n||_{H^0 \oplus \mathbb{C}^N} = 1
\quad
\textnormal{and}
\quad
\sup_{w\in H_n} \frac{|\langle\mathcal{V} u_n, w\rangle_{H^0 \oplus \mathbb{C}^N}|}{||w||_{H^0 \oplus \mathbb{C}^N}} \to 0
\quad
\textnormal{as}
\quad 
n \to \infty,
\end{align*}
which by the approximation criterion implies that
\begin{align*}
(\mathcal{V} u_n, w)_{H^0 \oplus \mathbb{C}^N} \to 0 
\quad
\textnormal{as}
\quad 
n \to \infty
\quad
\textnormal{for any}
\quad
w \in H^0 \oplus \mathbb{C}^N.
\end{align*}
This means that $\mathcal{V} u_n$ converges weakly to zero in $H^0\oplus \mathbb{C}^N$ hence also in $H^{-\frac{1}{2}}\oplus \mathbb{C}^N$.
In particular, we can also take $w = u_n$ in the above to get
\begin{align*}
\delta_n= \textnormal{Re} \, \langle\mathcal{V} u_n, u_n\rangle_{H^0 \oplus \mathbb{C}^N} \to 0 
\quad
\textnormal{as}
\quad 
n \to \infty.
\end{align*}
Now viewing $(u_n)_{n=1}^\infty$ within $H^{-\frac{1}{2}} \oplus \mathbb{C}^N$, it has a strongly convergent subsequence. 
Consequently, without loss of generality, we may assume that
\begin{align*}
    u_n \to u=(f,v)
    \quad
    \textnormal{in}
    \quad
    H^{-\frac{1}{2}} \oplus \mathbb{C}^N
    \quad
    \textnormal{as}
    \quad
    n \to \infty, 
\end{align*}
and writing $u_n= (f_n, v_n)$, we compute
\begin{align*}
    \delta_n
    &= 
    \textnormal{Re} \, \Big\langle\mathcal{A} f_n + \tau_\mathcal{A}(v_n), f_n \Big\rangle_{H^0}
    +
    \textnormal{Re} \, \langle \zeta_\mathcal{A}(f_n), v_n \rangle_{\mathbb{C}^N} \\
    &\geq
    \textnormal{Re} \, \langle\mathcal{A} f_n, f_n \rangle_{H^0} 
    - \Big( |\zeta_\mathcal{A}(f_n)| |v_n| + ||\tau_\mathcal{A}(v_n)||_{H^{\frac{1}{2}}} ||f_n||_{H^{-\frac{1}{2}}} \Big).
\end{align*}
Using the Gårding inequality, this gives
\begin{align*}
    \delta_n + \alpha_{-\frac{1}{2}}||f_n||^2_{H^{-\frac{1}{2}}} + \alpha_0|v_n|^2\geq \alpha_0 - \Big(|v_n| |\zeta_\mathcal{A}(f_n)| + ||\tau_\mathcal{A}(v_n)||_{H^{\frac{1}{2}}} ||f_n||_{H^{-\frac{1}{2}}}\Big),
\end{align*}
and so, by taking the limit on both sides, we get
\begin{align*}
   \alpha_{-\frac{1}{2}}||f||^2_{H^{-\frac{1}{2}}} + \alpha_0|v|^2 + \Big(|v| |\zeta_\mathcal{A}(f)| + ||\tau_\mathcal{A}(v)||_{H^{\frac{1}{2}}} ||f||_{H^{-\frac{1}{2}}}\Big)\geq \alpha_0> 0,
\end{align*}
which shows that $u =(f,v) \neq 0$, because $\alpha_0>0$ so that either $f \neq 0$ or $v \neq 0$ holds.
But $\mathcal{V} u_n $ converges strongly to $\mathcal{V} u$ in $H^{-\frac{1}{2}}\oplus \mathbb{C}^N$, so this inevitably forces $\mathcal{V} u = 0$. This implies $u= 0$ by Proposition~\ref{proposition:vector-well-posed}, a contradiction.
\end{proof}

The stability result ensures that the discrete operators are uniformly invertible.
A discrete version of Babu\v{s}ka’s theorem implies the quasi-optimal estimate 
\begin{align*}
    || \mathcal{V}^{-1}g - u_n ||_{L^2(\Gamma, T_\mathbb{C}\Gamma \oplus \mathbb{C}) \oplus \mathbb{C}^N} \leq \frac{||\mathcal{V}||}{\alpha} \inf_{w \in H_n} || \mathcal{V}^{-1}g  - w ||_{L^2(\Gamma, T_\mathbb{C}\Gamma \oplus \mathbb{C}) \oplus \mathbb{C}^N},
\end{align*}
where $g \in L^2(\Gamma, T_\mathbb{C}\Gamma \oplus \mathbb{C}) \oplus \mathbb{C}^N$ and $u_n\in H_n$ is the unique solution to
\begin{align*}
    \langle \mathcal{V} u_n, w \rangle = \langle g, w \rangle
    \quad
    \textnormal{for all}
    \quad
    w \in H_n.
\end{align*}
But the subspaces approximate $\mathcal{V}^{-1}g$ arbitrarily well, so convergence is guaranteed.
Naturally, the same kind of quasi-optimality holds for $\mathcal{S}$ and its discrete solutions.

\newpage
\section{High-Order Discretization}
$L^2$-stability allows flexibility in basis choice.
We want to employ at least piecewise polynomial curvilinear elements to represent $\Gamma$,
but this leads to a significant numerical challenge in evaluating $S_k \textnormal{curl}_\Gamma$ and $\textnormal{curl}_\Gamma S_k$. 
In their direct form, these two terms are expressed as Cauchy principal value integrals, 
where the limit depends on the eigenvalues of the Jacobian at the point of evaluation.
These complications make it difficult to evaluate them both quickly and systematically.
Let us write the scattering surface $\Gamma$ as a union 
\begin{align*}
    \Gamma = \cup_{j=1}^{n_U} \overline{U_j},
\end{align*}
where the $U_j \subset \Gamma$ are open and mutually disjoint, having only boundaries in common.
In the vector case, we must deal with the aforementioned problem in one of two ways.
Let $a,\psi \in L^2(\Gamma, T_\mathbb{C}\Gamma)$ and $\rho, \varphi\in L^2(\Gamma)$ have $C^\infty(U_j)\cap C^0(\overline{U_j})$ representatives on $U_j$.
Using integration by parts, we compute
\begin{align*}
    \langle S_k\textnormal{curl}_\Gamma a, \varphi \rangle
    =
    \sum_{j=1}^{n_U}\Big[ \int_{U_j} (\textnormal{curl}_{U_j} a|_{U_j}) S_k\varphi \ d\sigma  + \int_{\partial U_j} (t_{\partial U_j} \cdot a|_{U_j}) S_k \varphi \ dl \Big],
\end{align*}
and
\begin{align*}
    \langle \textnormal{curl}_\Gamma S_k \rho, \psi \rangle
    =
    \sum_{j=1}^{n_U}\Big[ \int_{U_j} (\textnormal{curl}_{U_j} \psi|_{U_j}) S_k\rho \ d\sigma  + \int_{\partial U_j} (t_{\partial U_j} \cdot \psi|_{U_j}) S_k \rho \ dl \Big],
\end{align*}
where $t_{\partial U_j}$ is the unit tangent field on $\partial U_j$ oriented counter-clockwise relative to $\nu$. 
Note that $S_k \rho|_{U_j}$ and $S_k \varphi|_{U_j}$ are in $H^1(U_j)$ and their traces on $\partial U_j$ are in $H^{+\frac{1}{2}}(\partial U_j)$, 
but with $\partial U_j$ a smooth manifold with corners (piecewise smooth closed curve) in $\Gamma$. 
The identities are thus obtained from the smooth case by a simple density argument.
Consequently, either we evaluate the line integrals, or we cancel them out on edges.
The latter option is practically easier, amounting to tangential continuity across edges, 
but the former achieves superior conditioning through $L^2$-orthonormal basis functions. \\

One reasonable way to approach discretization is to fix an atlas once and for all.
Refinement is in the parameter domain, geometric dependence isolated in constants.
In this setup, we assume each piece is parametrized by a smooth map
\begin{align*}
    \tau_j : [-1, +1]^2 \to \overline{U_j},
\end{align*}
which extends smoothly to a diffeomorphism on a neighborhood of $[-1, +1]^2$ in $\mathbb{R}^2$.
This ensures that its surface Jacobian is bounded from below by a positive constant, 
and it is of course always possible to arrange this by refining a suitable finite atlas.
The above observations on $S_k \textnormal{curl}_\Gamma$ and $\textnormal{curl}_\Gamma S_k $ transfer to all subdivisions of the $U_j$.
Define for $j$ a sequence in $n\in \mathbb{N}$ of partitions $\mathcal{Q}_{j,n}$ of $[-1,+1]^2$ into quadrangles $Q$, 
where each $Q$ carries a positively oriented affine transformation $R_Q : [-1,+1]^2 \to Q$. 
It is convenient to write
\begin{align*}
    h(Q) = \textnormal{diam}(Q),
\end{align*}
and $Q\in \mathcal{Q}_{j,n}$ is assigned a number $p(Q) \geq 1$, the tensor product approximation order.
Now let $\{P_n\}_{n=0}^\infty \subset L^2(-1,+1)$ be the standard orthonormal Legendre polynomials. 
The idea is to build, and pull back to $\Gamma$, the tensor product polynomial on each $Q$. 
Convergence is then controlled locally by the Babu\v{s}ka-Suri estimate \cite[Lemma 4.5]{BabuskaSuri1987}.

\newpage
Consider a fixed quadrangle $Q\in \mathcal{Q}_{j,n}$ in the $n$-th partition of the domain of $\tau_j$.
Associated to $Q$, we have the affine map $R_Q$ and the total polynomial order $p=p(Q)$.
In the scalar case, on $\tau_j(Q)$ we choose basis functions
\begin{align*}
    \rho_{m,n} = ((\tau_j \circ R_Q)^{-1})^*\Big(\frac{1}{\sqrt{J_j}}P_m \boxtimes P_n \Big),
\end{align*}
where $J_j$ is the surface Jacobian of $\tau_j \circ R_Q : [-1,1]^2 \to \tau_j(Q)$ in the set $[-1,+1]^2$.
These basis functions are by construction $L^2(\Gamma)$-orthonormal due to the factor $1/\sqrt{J_j}$.
In the vector case, we might try
\begin{align*}
    a_1^{m,n} &= \rho_{m,n} ( \Lambda_{11} e_1 + \Lambda_{12} e_2 ), \\
    a_2^{m,n} &= \rho_{m,n} ( \Lambda_{21} e_1 + \Lambda_{22} e_2),
\end{align*}
where $\{e_1, e_2\}$ is the local frame of $T \tau_j(Q)$ induced by $\tau_j \circ R_Q$, and
\begin{align*}
\begin{bmatrix}
    \Lambda_{11} & \Lambda_{12} \\
    \Lambda_{21} & \Lambda_{22}
\end{bmatrix}
&=
\begin{bmatrix}
    e_1 \cdot e_1 & e_1 \cdot e_2 \\
    e_2 \cdot e_1 & e_2 \cdot e_2
\end{bmatrix}^{-\frac{1}{2}} \\
&= 
\frac{\sqrt{(J_j + |e_1|^2)+(J_j + |e_2|^2)}}{(J_j + |e_1|^2)(J_j + |e_2|^2) - (e_1 \cdot e_2)^2}
\begin{bmatrix}
    |e_2|^2 + J_j & -e_1 \cdot e_2 \\
    -e_1 \cdot e_2 & |e_1|^2 + J_j
\end{bmatrix},
\end{align*}
which is to say that the matrix $\Lambda$ is just the square root of the inverse metric tensor.
A basis expansion on $\tau_j(Q)$ is of the form
\begin{align*}
    a =   \sum_{m=0}^{p} \sum_{n=0}^{p} C_{m,n}^1 a^{m,n}_1 + \sum_{m=0}^{p} \sum_{n=0}^{p} C_{m,n}^2 a^{m,n}_2.
\end{align*}
This construction yields an orthonormal system, at the price of geometric complexity.
Alternatively,  
we can use the modified polynomials $\{ \widetilde{P}_n \}_{n=1}$ of \cite{Jorgensen2003} given by
\begin{align*}
    \widetilde{P}_n(x)
    = 
    \begin{cases}
        1 - x & \textnormal{if} \quad n = 0, \\
        1 + x & \textnormal{if} \quad n = 1, \\
        P_n(x) - P_{n-2}(x) & \textnormal{if} \quad n \geq 2,
    \end{cases}
\end{align*}
and the contravariant basis
\begin{align*}
    a^1_{m,n} = ((\tau_j \circ R_Q)^{-1})^*(P_m \boxtimes \widetilde{P}_n) e^1, \\
    a^2_{m,n} = ((\tau_j \circ R_Q)^{-1})^*(\widetilde{P}_m \boxtimes P_n) e^2,
\end{align*}
where $\{ e^1, e^2 \}$ is the dual frame to $\{e_1, e_2\}$, a frame for the cotangent bundle $T^*\tau_j(Q)$.
Another basis expansion on $\tau_j(Q)$ is then of the form
\begin{align*}
    a^\flat =   \sum_{m=0}^{p} \sum_{n=0}^{p+1} C_1^{m,n} a^1_{m,n} + \sum_{m=0}^{p+1} \sum_{n=0}^{p} C^{m,n}_2 a^2_{m,n},
\end{align*}
and on any two quadrangles with a shared edge tangential continuity is easy to enforce. 
It amounts to equal coefficients for the adjacent basis functions non-zero on the edge.
Note that the sums are made to fit with the Nedelec constraint for the surface curl, 
which must be computed for either choice. In the latter case, it is much simpler \cite{Jorgensen2003}.

\newpage
\begin{corollary}\label{corollary:hp-convergence}
Let the mesh be shape-regular, in the sense of \cite[pp. 203, 3.5c]{BabuskaSuri1987}.
On a given quadrangle $Q \in \mathcal{Q}_{j,n}$ we write
\begin{align*}
   \varepsilon(Q) =  C_m\frac{h(Q)^{\min\{p(Q)+1, m \}}}{p(Q)^m},
\end{align*}
where $C_m$ is a constant depending only on $m$, the geometry of $\Gamma$, and the basis choice.
Writing $\mathcal{V}^{-1}g = (f,v)$ or $\mathcal{S}^{-1}g = (f,v)$ in the discontinuous setting:

\medskip
\begin{enumerate}
    \item If $g\in L^2(\Gamma,T_\mathbb{C}\Gamma \oplus \mathbb{C})\oplus \mathbb{C}^N$ and $u_n$ solves the discrete problem for $\mathcal{V}$, then
\begin{align*}
    || \mathcal{V}^{-1}g - u_n ||_{L^2(\Gamma,T\Gamma \oplus \mathbb{C})\oplus \mathbb{C}^N}  \leq \frac{||\mathcal{V}||}{\alpha} \sum_{j=1}^{n_U} \sum_{Q\in \mathcal{Q}_{j,n}}  \varepsilon(Q) ||f||_{H^{m}(\tau_j(Q),T_\mathbb{C}\tau_j(Q) \oplus \mathbb{C})}.
\end{align*}
    \item If $g\in L^2(\Gamma)\oplus \mathbb{C}^N$ and $u_n$ solves the discrete problem for $\mathcal{S}$, then
\begin{align*}
    || \mathcal{S}^{-1}g - u_n ||_{L^2(\Gamma)\oplus \mathbb{C}^N}  \leq \frac{||\mathcal{S}||}{\beta} 
    \sum_{j=1}^{n_U} \sum_{Q\in \mathcal{Q}_{j,n}}  \varepsilon(Q) ||f||^2_{H^m(\tau_j(Q))}.
\end{align*}
\end{enumerate}
\end{corollary}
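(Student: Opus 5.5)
The plan is to combine the quasi-optimality estimate stated after Theorem~\ref{theorem:DPIE-galerkin-stable} with a local best-approximation bound on each quadrangle. Theorem~\ref{theorem:DPIE-galerkin-stable} provides the inf-sup constant $\alpha$ (resp.\ $\beta$) for $n\geq n_0$. The discrete Babu\v{s}ka argument then gives
\begin{align*}
    \|\mathcal{V}^{-1}g - u_n\| \leq \frac{\|\mathcal{V}\|}{\alpha}\inf_{w\in H_n}\|\mathcal{V}^{-1}g - w\|.
\end{align*}
It therefore suffices to exhibit one $w=(w_f,w_v)\in H_n$ whose error is bounded by the stated sum. The $\mathbb{C}^N$ component is trivial: we take $w_v=v$, since $\mathbb{C}^N$ is contained in every $H_n$, so it contributes nothing. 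The remaining task is to bound $\inf\|f-w_f\|_{L^2}$ over the piecewise polynomial space. The discontinuous setting is essential here: the global $L^2$ best approximation decouples into independent local problems on each $\tau_j(Q)$. Hence
\begin{align*}
\|f-w_f\|_{L^2(\Gamma)}\leq\sum_{j}\sum_{Q}\|f-w_f\|_{L^2(\tau_j(Q))},
\end{align*}
using $\|\cdot\|_{L^2}\le$ the $\ell^1$ sum of local norms. In fact the $\ell^2$ sum also holds, which is why the squared form appearing in item 2 is harmless.

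On a single $Q$, I would pull back by $\tau_j\circ R_Q$ to the reference square $[-1,1]^2$. In the scalar case the basis $\rho_{m,n}$ is $(\tau_j\circ R_Q)^{-1*}$ of $J_j^{-1/2}(P_m\boxtimes P_n)$. Approximating $f$ on $\tau_j(Q)$ by this span is therefore equivalent, via an isometry, to approximating $\sqrt{J_j}\,(f\circ\tau_j\circ R_Q)$ by tensor Legendre polynomials of degree $\le p$ in the $L^2([-1,1]^2)$ norm. The Babu\v{s}ka--Suri estimate \cite[Lemma 4.5]{BabuskaSuri1987} bounds this by $C\,h(Q)^{\min\{p+1,m\}}p^{-m}$ times the $H^m$ seminorm/norm on the parameter-domain quadrangle $Q$. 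Shape regularity of the mesh is what makes the affine scaling produce the factor $h(Q)^{\min\{p+1,m\}}$. Next, since $\tau_j$ extends to a diffeomorphism near $[-1,1]^2$, both $J_j$, $1/J_j$ and all derivatives of $\sqrt{J_j}$ are bounded uniformly. Leibniz's rule and the chain rule then give $\|\sqrt{J_j}\,f\circ\tau_j\|_{H^m(Q)}\le C\|f\|_{H^m(\tau_j(Q))}$, with $C$ depending only on $m$ and the fixed atlas. These constants are absorbed into $C_m$.

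For the vector part I would treat the basis $a_i^{m,n}$ the same way. Writing $f=f^1 e_1+f^2e_2$, the components with respect to the smooth orthonormalized frame $\Lambda$ are smooth multiples of $f$, with geometric coefficients that are bounded in $C^m$. Each component is then approximated as in the scalar case, and the equivalence of norms between the frame and $T_\mathbb{C}\Gamma$ is again uniform. The $\rho$ component is handled exactly as in the scalar case, and summing everything yields item 1. Item 2 follows identically using $\beta$ and $\|\mathcal{S}\|$.

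The main obstacle is the uniformity of the geometric constants. The key observation is that $h(Q)$-scaling happens only through the affine map $R_Q$, while all curvature dependence lives in the fixed maps $\tau_j$. One must check that the Babu\v{s}ka--Suri lemma, stated for affine images of the square, is applied to the pulled-back function on $Q\subset[-1,1]^2$ rather than on the curved patch. Its $H^m$ norm is then transferred back through the fixed $\tau_j$, which costs only $m$-dependent constants. A minor point is that $\inf_{w\in H_n}$ requires $H_n$ to contain the full discontinuous space on each $Q$, which is the stated discontinuous setting.
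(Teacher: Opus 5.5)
Your proposal follows the paper's proof essentially step for step: quasi-optimality, matching the $\mathbb{C}^N$ component exactly and splitting the remaining best approximation over the quadrangles of the discontinuous space, the isometric pullback by $\tau_j\circ R_Q$ that absorbs $\sqrt{J_j}$ and the frame $\Lambda$, the Babu\v{s}ka--Suri estimate on $Q$, and transfer of the $H^m$ norm back through the fixed $\tau_j$. One correction: your $\ell^2$-summation remark does not make the squared norm $\|f\|^2_{H^m(\tau_j(Q))}$ in item 2 harmless, because $\sum_Q \varepsilon(Q)\|f\|^2_{H^m(\tau_j(Q))}$ is not an $\ell^2$ aggregate and is quadratic in $g$ while the error is linear in $g$ (replace $g$ by $tg$ and let $t\to 0$); that square, like the one in the last line of the paper's proof, is a typo, and what you and the paper actually establish is the unsquared bound.
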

\begin{proof}
This is quasi-optimality and the Babu\v{s}ka-Suri estimate \cite[Lemma 4.5]{BabuskaSuri1987}.
Put $H = L^2(\Gamma,T_\mathbb{C}\Gamma \oplus \mathbb{C})\oplus \mathbb{C}^N$. Let $\mathcal{P}_{p}$ be the polynomials up to order $p$ on $(-1,+1)$.
Allowing $H_n$ to be the spaces formed by $\{\mathcal{Q}_{j,n}\}$ in the vector case, we get
\begin{align*}
    \inf_{w\in H_n} || \mathcal{V}^{-1}g - w||_{H}
    &=
    \inf_{w\in H_n} \sum_{j=1}^{n_U} || (f, 0) - w||_{L^2(U_j; T_\mathbb{C}U_j \oplus \mathbb{C})\oplus \mathbb{C}^N} \\
    &=
    \sum_{j=1}^{n_U} \sum_{Q\in \mathcal{Q}_{j,n}} \inf_{v \in P_Q} \Big|\Big| \tau_j^*f - (\tau_j^*\Lambda \oplus 1) (R_Q^{-1})^*\Big(\frac{1}{\sqrt{J_j}}v\Big) \Big|\Big|_{L^2(Q; \mathbb{C}^3)} \\
    &\leq
    \sum_{j=1}^{n_U} \sum_{Q\in \mathcal{Q}_{j,n}} C_j'\inf_{v \in P_Q} \Big|\Big| \sqrt{J_j \circ R_Q^{-1}}(\tau_j^*\Lambda \oplus 1)^{-1} \tau_j^*f - v \Big|\Big|_{L^2(Q; \mathbb{C}^3)} \\
    &\leq
    \sum_{j=1}^{n_U} \sum_{Q\in \mathcal{Q}_{j,n}}  \varepsilon(Q) || \tau_j^*f ||_{H^m(Q; \mathbb{C}^3)} \\
    &=
    \sum_{j=1}^{n_U} \sum_{Q\in \mathcal{Q}_{j,n}}  \varepsilon(Q) || f ||^2_{H^m(\tau_j(Q); T_\mathbb{C}\tau_j(Q) \oplus \mathbb{C})},
\end{align*}
where $P_Q = (\mathcal{P}_{p(Q)} \boxtimes \mathcal{P}_{p(Q)})^3$ and we have $\mathcal{V}^{-1}g = (f, v)$
with $f \in  L^2(\Gamma,T_\mathbb{C}\Gamma \oplus \mathbb{C})$.
Note that $H_n$ always has the finite $\mathbb{C}^N$ summand, so the first equality makes sense.
Also, the constants $C_j'>0$ are here the intermediary quantities
\begin{align*}
  C_j' =  \max_{Q} \frac{||(\tau_j^*\Lambda \oplus 1) ||}{\sqrt{J_j\circ R_Q^{-1}}} 
\end{align*}
which are fixed once and for all by geometry, and are independent of everything else.
The argument is similar in the scalar case.
\end{proof}

Note that only shape-regularity, not quasi-uniformity, is required for the estimates.
Also, in the tangentially continuous case, a Scott-Zhang type operator can be built, 
and a slightly modified estimate can similarly be proved for the vector part.

\newpage
Finally, we obtained stability and convergence for \textit{exact} geometry representation.
But in practice, we must defer to an imperfect patch-wise polynomial approximation,
and so, technically speaking, the obtained results are no longer valid in that setting.
Nevertheless, similar results \textit{should} still hold if the representation is "good enough",
where the exact surface is replaced by a continuous, piece-wise polynomial surface.
This leap of faith really demands proper justification, but we shall not do this here.
Concretely, we replace the $\tau_j\circ R_Q$ by patches of the form
\begin{align*}
    \widehat{\tau} : [-1, +1]^2 \to \mathbb{R}^3 : (u,v) \mapsto \sum_{k=0}^q \sum_{l=0}^q r_{k,l} (\ell_k^q \boxtimes\ell^q_l)(u,v),
\end{align*}
where $\ell^p_k$ is the Lagrange polynomial
\begin{align*}
    \ell_k^q(u) = \prod_{k\neq l}^q \frac{u - u_l}{u_k - u_l},
\end{align*}
and $r_{k,l} = (\tau_j\circ R_Q)(t_k,t_l)$ are interpolation nodes with $t_k = -1 + \frac{2k}{q}$ equidistant.
Note that $q$ is the patch order. It is not coupled to the order(s) of the basis functions.
Another way to write this is in the monomial basis
\begin{align*}
    \widehat{\tau}(u,v) = \sum_{k=0}^q \sum_{l=0}^q \widehat{r}_{k, l} u^k v^l,
\end{align*}
which is more suitable for a general numerical implementation as a recursive scheme. 
Also, the coefficients $\widehat{r}_{k, l}$ can be expressed directly in terms of the interpolation nodes, and a small table recording this for $q = 1$ and $q=2$ is found in \cite[Appendix B.1]{Jorgensen2003}.
In the next section, we shall employ patches of order $q=4$ only, requiring $25$ nodes.\\

We shall refer to the tangentially continuous/discontinuous schemes as TCG/DG.
The latter is necessarily going to be spectrally convergent, in view of Corollary~\ref{corollary:hp-convergence},
and, as noted, a parallel argument leads to the same conclusion for the TCG DPIE.
The matrix filling time is expected to be much longer compared to EFIE/MFIE/CFIE, 
because the DPIE system matrices are far more complicated and involve more terms. 
But their complexities are necessarily the same, differing only by a constant overhead,
where the bottleneck is just evaluation of $\Phi_k$ once in each patch-patch interaction.
This is good news, because large problems are usually dominated by the solve step, 
which is performed by an (accelerated) iterative method like GMRES or BiCGstab.
The amount of time spent in this step is linked to the system matrix condition number, 
which controls how many iterations, hence matrix-vector multiplications, are required.
But the conditioning of the Galerkin schemes here are independent of the element size,
because of the operator level immunity to low-frequency and topological breakdown,
and our high-order polynomial implementations do not destroy this desirable property.
\newline
In our implementation, it is expected to grow only mildly with the polynomial order,
and should mirror the CFIE discretized with a high-order divergence-conforming basis.
(A true independence of order is really only achievable for orthogonal basis choices.)
It is thus expected that for large multiscale problems, the DPIE will perform better.
Competing formulations, such as regularized CFIE \cite{Bruno2009}, analytic preconditioning \cite{Baumann2023}, and the method of generalized Debye sources \cite{EpsteinGreengard2010Debye, EpsteinGreengardONeil2013Debye} seek to attain similar advantages.
Also, in many applied cases, e.g. radar cross section, only the far-field is of interest.
In that situation, one can do away with the scalar equation altogether.


\newpage
\section{Conditioning and Convergence}
Provided $k\geq 1$, we rescale as in \cite{VicoGreengard2016, VicoGreengard2025}.
Originally due to \cite{Kress1985}, this should result in optimal conditioning at high frequencies.
The vector DPIE is then for $k\geq 1$ replaced by its scaled version
\begin{align*}
\Big(
\begin{bmatrix}
    \frac{1}{2}I + M_k & k L_k \\
    0 &  \frac{1}{2}I + D_k
\end{bmatrix}
+
i\eta
\begin{bmatrix}
   k JS_kJ & \textnormal{curl}_\Gamma S_k  \\
   S_k \textnormal{curl}_\Gamma & -k S_k
\end{bmatrix}
\Big)
\begin{bmatrix}
    a \\
    \rho
\end{bmatrix}
-
\begin{bmatrix}
    0 \\
    \sum_{j=1}^N v_j \chi_{\Gamma_j}
\end{bmatrix}
&=
\begin{bmatrix}
    f \\
   \frac{1}{k} h
\end{bmatrix}, \\
\int_{\Gamma_j} - k \nu \cdot S_k (\nu\rho) + i\eta \Big( k \nu \cdot S_k Ja -\frac{1}{2}\rho + D^t_k \rho \Big) \, d\sigma &= q_j,
\end{align*}
and the scalar DPIE is similarly replaced by
\begin{align*}
\Big(
\frac{1}{2}I + D_k
-
i\eta k
S_k 
\Big)
\rho
- \sum_{j=1}^N v_j \chi_{\Gamma_j}
&=
h, \\
\int_{\Gamma_j} (W_k - W_0)\rho + i\eta k\Big(\frac{1}{2} - D^t_k \Big)\rho \, d\sigma &=  q_j ,
\end{align*}
where each instance of $S_k$ has been scaled such that the combined norm is $O(1)$ in $k$.
It is clear that these scalings have no influence whatsoever on Theorems~\ref{theorem:DPIE-well-posed} and \ref{theorem:DPIE-galerkin-stable}.
The element approximation results of the previous section remain unchanged as well. \\

All the elements are now in place for studying the properties of our discretization.
The scattered field in the unbounded medium is determined by 
\begin{align*}
\phi &= D_k\rho - i\eta S_k \rho, \\
A &=  \nabla \times S_k a -  S_k(\nu \rho) + i \eta( S_k(\nu \times a) + \nabla S_k\rho),
\end{align*}
and in the $k\geq 1$ optimally scaled case by
\begin{align*}
\phi &= D_k\rho - i\eta k S_k \rho, \\
A &=  \nabla \times S_ka - k S_k(\nu \rho) + i \eta(k S_k(\nu \times a) + \nabla S_k\rho),
\end{align*}
where $S_k$ and $D_k$ by abuse of notation denote the corresponding potentials in $\mathbb{R}^3 \setminus \Gamma$.
These fields are in the Lorenz gauge by definition, as checked by a direct computation.
It is always assumed that the incident fields $\phi^\textnormal{inc}$ and $A^\textnormal{inc}$ are also in the Lorenz gauge.
Now in the vector case, the boundary conditions require that
\begin{align*}
    f &= - \nu \times A^\textnormal{inc}|_\Gamma, \\
    h &= - \nabla \cdot A^\textnormal{inc}|_{\Gamma}, \\
    q_j &= - \int_{\Gamma_j} \nu \cdot A^\textnormal{inc} \, d\sigma,
\end{align*}
whereas in the scalar case, we must impose
\begin{align*}
    h &= - \phi^\textnormal{inc}|_\Gamma, \\
    q_j &= - \int_\Gamma \nu \cdot A^\textnormal{inc} \, d\sigma,
\end{align*}
and in both cases the scattered fields are bounded as $k\to 0$ if the incident fields are.
Moreover, the scattered fields are then also in the Lorenz gauge \cite[Theorem 4.2]{VicoGreengard2016}.

\newpage
A natural choice of trial incident field is a plane wave in $\mathbb{R}^3$ with direction $\hat{k}\in \mathbb{S}^2$.
Given a fixed polarization vector $E_p\in \mathbb{C}^3$, we represent it by the potentials
\begin{align*}
\phi^\textnormal{inc} &: \mathbb{R}^3 \to \mathbb{C}^3 : x \mapsto -x\cdot E_p e^{i k \hat{k}\cdot x}, \\
A^\textnormal{inc} &: \mathbb{R}^3 \to \mathbb{C}^3 : x \mapsto -u(x\cdot E_p) c_0^{-1} e^{i k \hat{k}\cdot x},
\end{align*}
which are related via the Lorenz gauge, and both potentials remain bounded as $k\to 0$.
The constant $c_0$ is the speed of light in vacuum. We henceforth let this be the medium.
Our incident plane wave is linearly polarized, so that $E_p = (1,0,0)$, and $\hat{k} = (0, 0, -1)$. \\

\begin{figure}[H]
    \centering
    \begin{subfigure}{0.48\textwidth}
        \centering
        \includegraphics[width=0.75\textwidth]{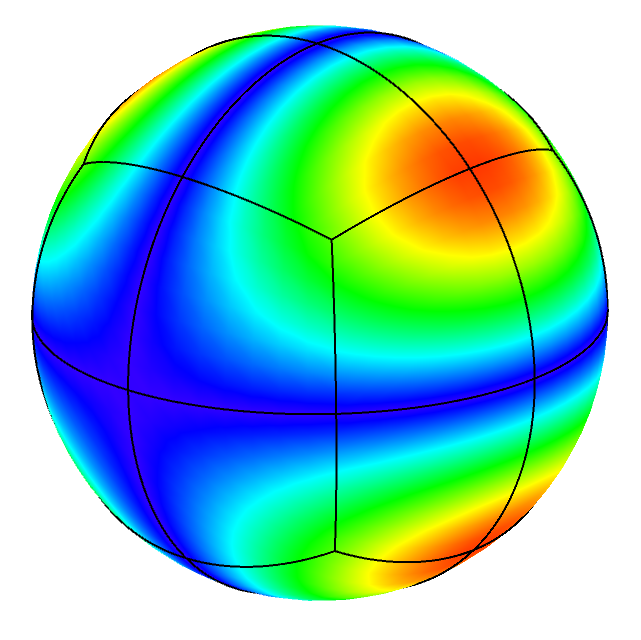}
        \caption{$|a_1|$ of $a=(a_1,a_2,a_3)$.}
    \end{subfigure}
    \hfill
    \begin{subfigure}{0.48\textwidth}
        \centering
        \includegraphics[width=0.75\textwidth]{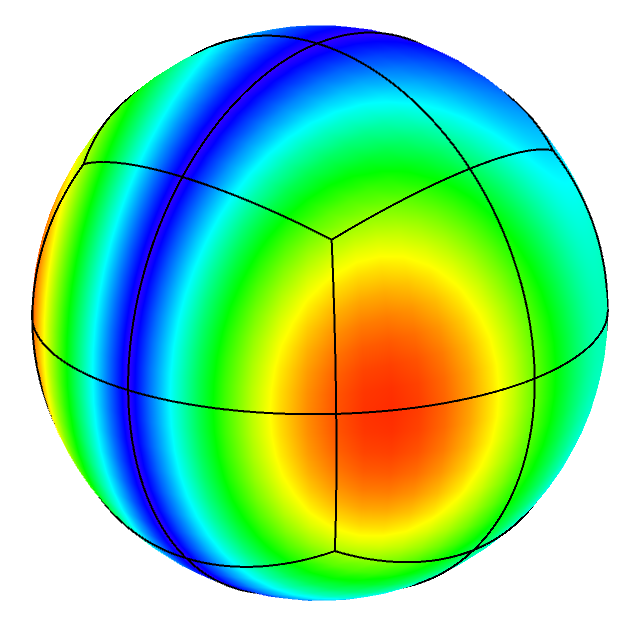}
        \caption{$|\rho|$.}
    \end{subfigure}
    \caption{Unit sphere vector DPIE solution at $k=\pi/8$.}
    \label{figure:dpie_sphere}
\end{figure}

\noindent
Above, in Figure~\ref{figure:dpie_sphere}, the basis is discontinuous with $p=6$, all patches are fourth-order.
It is also interesting to test a scatterer of non-zero genus, with multiple components.
Here we simply take a link of two tori, both of inner radius $0.15$ and outer radius $0.5$.

\begin{figure}[H]
    \centering
    \begin{subfigure}{0.48\textwidth}
        \centering
        \includegraphics[width=0.75\textwidth]{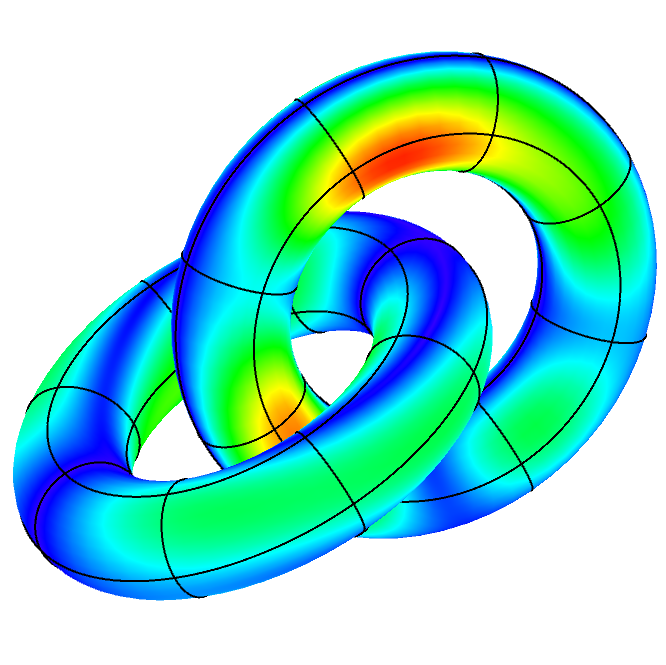}
        \caption{$|a_1|$ of $a=(a_1,a_2,a_3)$.}
    \end{subfigure}
    \hfill
    \begin{subfigure}{0.48\textwidth}
        \centering
        \includegraphics[width=0.75\textwidth]{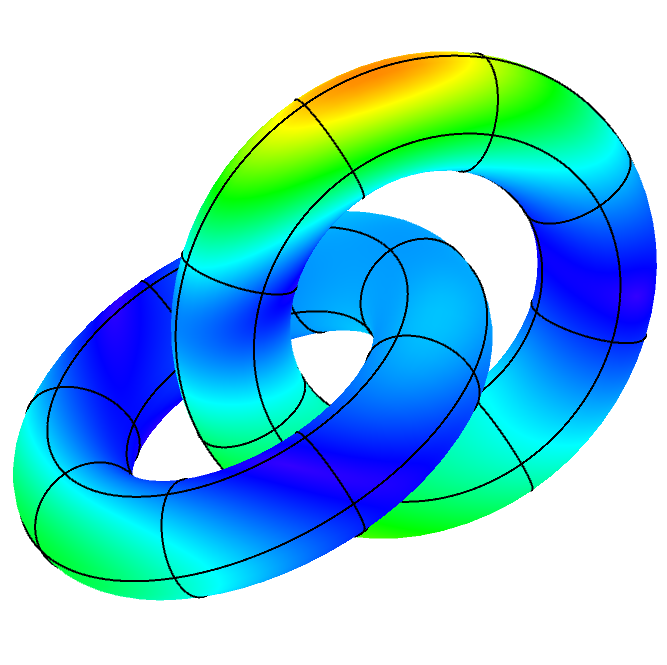}
        \caption{$|\rho|$.}
    \end{subfigure}
    \caption{Linked tori vector DPIE solution at $k=\pi/2$.}
    \label{figure:dpie_linked_tori}
\end{figure}

\noindent
Again, in Figure~\ref{figure:dpie_linked_tori}, the numerical settings are the same as for the above unit sphere.

\newpage
Once appropriately scaled, the schemes stabilize the condition number very well.
In fact, the fully discontinuous choice appears to eliminate dependence on the order,
whereas enforcing tangential continuity causes only mild growth upon order increase.
This is also consistent with an operator norm bounded for $k\to 0$ as well as $k \gg 1$, 
and the growth is attributable to loss of orthogonality, not the underlying operator. 
A clear demonstration of this is given below in Figure~\ref{figure:dpie_sphere_conditioning}.\\

\begin{figure}[H]
    \centering
    \begin{tikzpicture}
        \begin{loglogaxis}[
            width = 0.96\textwidth,
            height = 6.15cm,
            xmin=6e-6,
            xmax=2.5e1,
            ymin=1e0,
            ymax=1e18,
            ylabel={Cond. number at $p=2$},
            legend pos=north east,
            legend style={
                draw=none,
                fill=none,
                font=\small
            },
        ]
        
        \addplot[solid, thick]
        table[x = k, y = c_efie_order_2]{condition.dat};
        \addlegendentry{EFIE}
        
        \addplot[dashed, thick]
        table[x = k, y = c_cfie_order_2]{condition.dat};
        \addlegendentry{CFIE}

        \addplot[dashdotted, thick]
        table[x = k, y = c_vdpie_order_2]{condition.dat};
        \addlegendentry{TCG Vector DPIE}

        \addplot[dotted, thick]
        table[x = k, y = c_vdpie_break_order_2]{condition.dat};
        \addlegendentry{DG Vector DPIE}
        
        \end{loglogaxis}
    \end{tikzpicture}
    \begin{tikzpicture}
        \begin{loglogaxis}[
            width = 0.96\textwidth,
            height = 6.15cm,
            xmin=6e-6,
            xmax=2.5e1,
            ymin=1e0,
            ymax=1e18,
            ylabel={Cond. number at $p=4$},
            legend pos=north east,
            legend style={
                draw=none,
                fill=none,
                font=\small
            },
        ]
        
        \addplot[solid, thick]
        table[x = k, y = c_efie_order_4]{condition.dat};
        
        \addplot[dashed, thick]
        table[x = k, y = c_cfie_order_4]{condition.dat};

        \addplot[dashdotted, thick]
        table[x = k, y = c_vdpie_order_4]{condition.dat};

        \addplot[dotted, thick]
        table[x = k, y = c_vdpie_break_order_4]{condition.dat};
        
        \end{loglogaxis}
    \end{tikzpicture}
    \begin{tikzpicture}
        \begin{loglogaxis}[
            width = 0.96\textwidth,
            height = 6.15cm,
            xmin=6e-6,
            xmax=2.5e1,
            ymin=1e0,
            ymax=1e18,
            xlabel={$k$},
            ylabel={Cond. number at $p=6$},
            legend pos=north east,
            legend style={
                draw=none,
                fill=none,
                font=\small
            },
        ]
        
        \addplot[solid, thick]
        table[x = k, y = c_efie_order_6]{condition.dat};
        
        \addplot[dashed, thick]
        table[x = k, y = c_cfie_order_6]{condition.dat};

        \addplot[dashdotted, thick]
        table[x = k, y = c_vdpie_order_6]{condition.dat};

        \addplot[dotted, thick]
        table[x = k, y = c_vdpie_break_order_6]{condition.dat};
        
        \end{loglogaxis}
    \end{tikzpicture}
    \caption{Unit sphere model condition numbers due to the mesh shown in Figure~\ref{figure:dpie_sphere}. 
    Comparison is made with a commercial solver using divergence-conforming bases \cite{TicraSoftware}.
    Note the spikes due to the presence of resonances in the EFIE.}
    \label{figure:dpie_sphere_conditioning}
\end{figure}

\newpage
Clearly, it achieves better conditioning than the commercial EFIE/CFIE solver.
In the high-frequency regime, the fully discontinuous version even outperforms both.
The total matrix filling time is still dominated by evaluations of the Helmholtz kernel, 
which is the complexity scaling bottleneck, making it viable for multi-scale problems.\\
An $(h,p)$-convergence check versus the analytic Mie series is shown below in Figure~\ref{figure:dpie_hp_sphere}.\\

\begin{figure}[H]
    \centering
    \begin{tikzpicture}
        \begin{semilogyaxis}[
            width = 0.8\textwidth,
            height = 9cm,
            ylabel = {Max. absolute error},
            xlabel = {$p$},
            xmin = 1,
            xmax = 7,
            ymin = 1e-7,
            ymax = 1,
            xtick = {1,2,3,4,5,6,7},
            grid = major,
            legend style={
                draw = none,
                fill = none,
                at = {(0.98,0.98)},
                anchor = north east,
            },
        ]

        \addplot[
            black,
            only marks,
            mark = *,
            mark size = 2.2pt,
            restrict x to domain = 1:4,
        ]
        table[x=p,y=m025]{tcg_hp_sphere.dat};
        \addlegendentry{TCG $h \approx 0.29$}
        
        \addplot[
            black,
            only marks,
            mark = square*,
            mark size = 2.2pt,
            restrict x to domain = 1:5,
        ]
        table[x=p,y=m05]{tcg_hp_sphere.dat};
        \addlegendentry{TCG $h \approx 0.51$}
        
        \addplot[
            black,
            only marks,
            mark = triangle*,
            mark size = 2.8pt,
        ]
        table[x=p,y=m1]{tcg_hp_sphere.dat};
        \addlegendentry{TCG $h \approx 1.02$}

        \addplot[
            black,
            only marks,
            mark = o,
            mark size = 2.2pt,
            restrict x to domain = 1:4,
        ]
        table[x=p,y=m025]{dg_hp_sphere.dat};
        \addlegendentry{DG $h \approx 0.29$}
        
        \addplot[
            black,
            only marks,
            mark = square,
            mark size = 2.2pt,
            restrict x to domain = 1:5,
        ]
        table[x=p,y=m05]{dg_hp_sphere.dat};
        \addlegendentry{DG $h \approx 0.51$}
        
        \addplot[
            black,
            only marks,
            mark = triangle,
            mark size = 2.8pt,
        ]
        table[x=p,y=m1]{dg_hp_sphere.dat};
        \addlegendentry{DG $h \approx 1.02$}

        \addplot[dashed, black, domain=1:7] {1e2*exp(-2.25*x)};
        \addplot[dashed, black, domain=1:7] {8e1*exp(-3.35*x)};
        \addplot[dashed, black, domain=1:7] {6e1*exp(-4.55*x)};
        
        \end{semilogyaxis}
    \end{tikzpicture}
    \caption{Unit sphere scattered electric near-field versus analytic Mie series at $k=2\pi$. The fields are compared on a circle in the $xz$-plane of radius $2$ centered at the origin, and both fields are numerically evaluated in exactly $361$ arc-length equidistant points.}
    \label{figure:dpie_hp_sphere}
\end{figure}

\begin{figure}[H]
    \centering
    \begin{subfigure}{0.30\textwidth}
        \centering
        \includegraphics[width=0.5\textwidth]{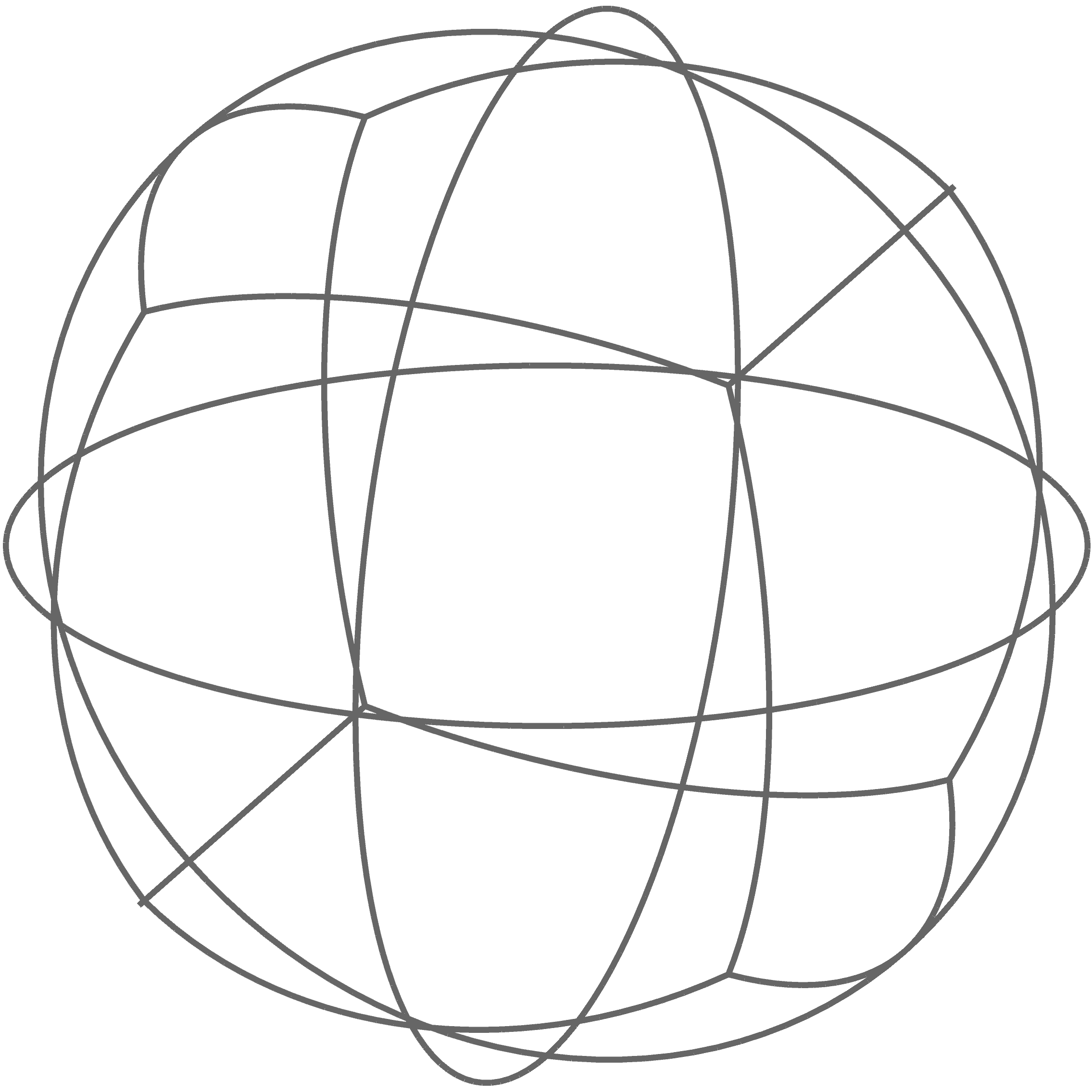}
        \caption{$h \approx 1.02$.}
    \end{subfigure}
    \hfill
    \begin{subfigure}{0.30\textwidth}
        \centering
        \includegraphics[width=0.5\textwidth]{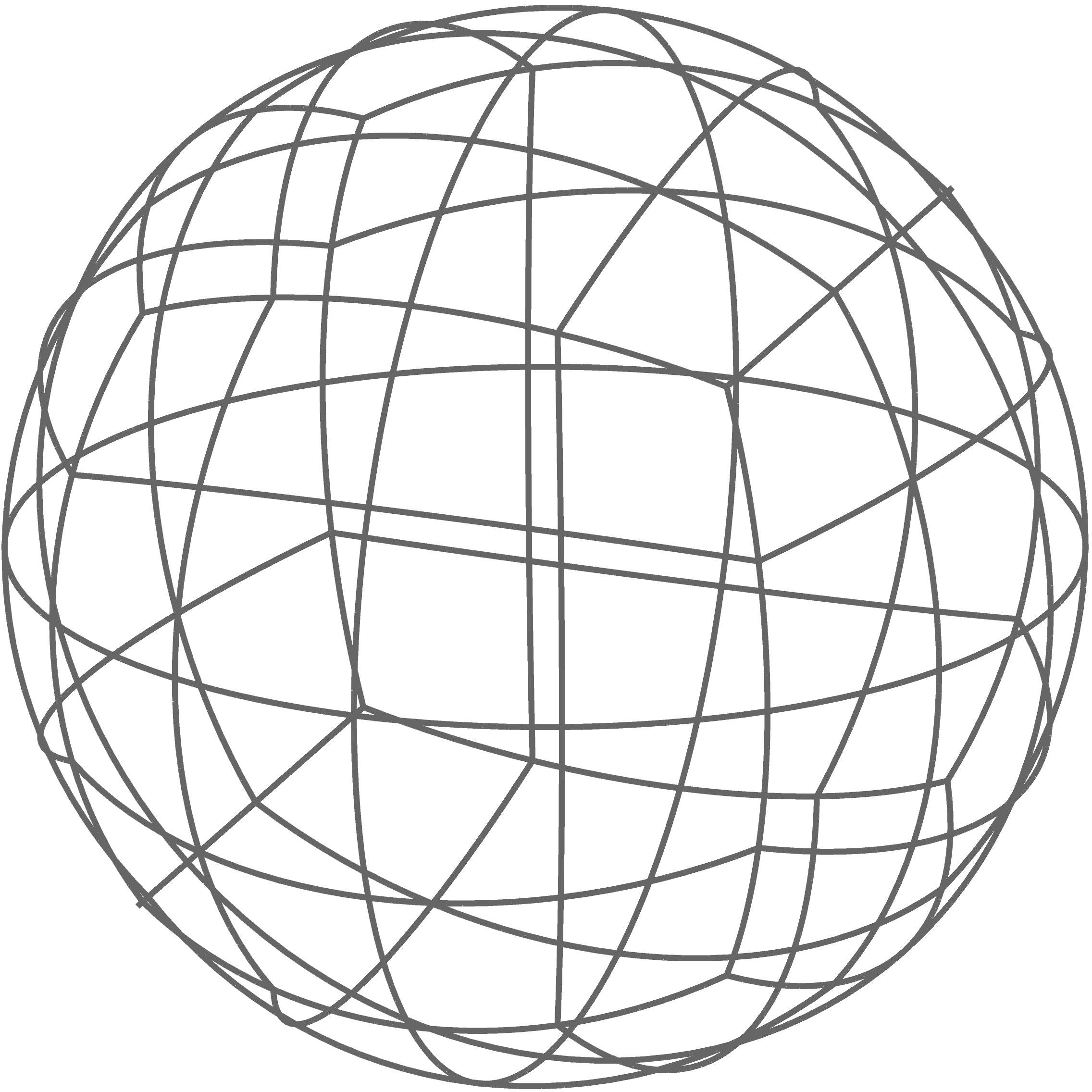}
        \caption{$h \approx 0.51$.}
    \end{subfigure}
    \hfill
    \begin{subfigure}{0.30\textwidth}
        \centering
        \includegraphics[width=0.5\textwidth]{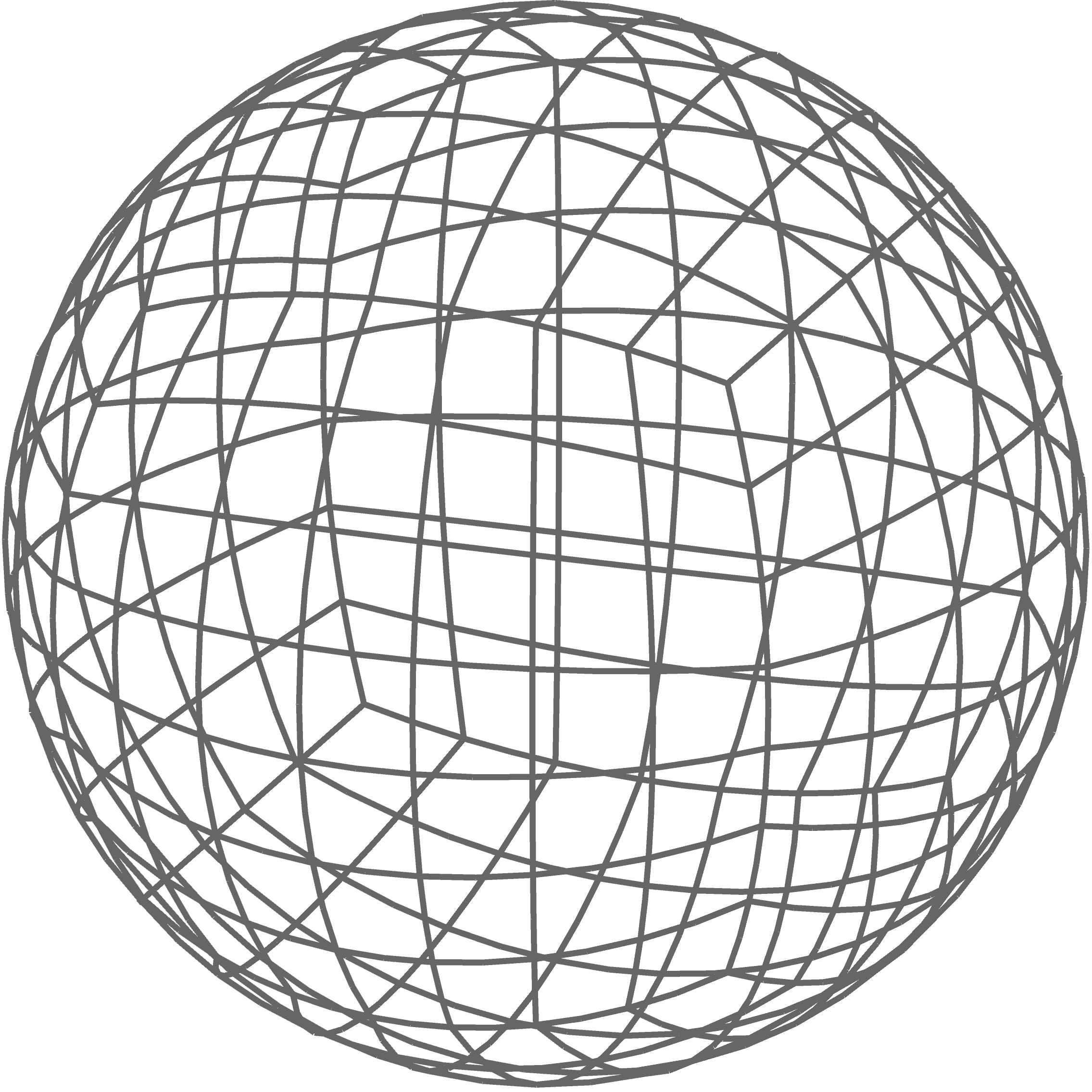}
        \caption{$h \approx 0.29$.}
    \end{subfigure}
    \caption{Quadrilateral patches with average diameter $h$ used in Figure~\ref{figure:dpie_hp_sphere}.}
    \label{figure:hp_sphere}
\end{figure}

Of course, realistically, the scatterer model surface $\Gamma$ is only piece-wise smooth.
Further experiments hint that tangential continuity is the right constraint in that case,
and that the solver is convergent, if this is imposed at non-smooth edges and vertices.
This suggests that the correct approach is a hybrid continuous-discontinuous scheme,
where mesh flexibility and near-orthogonality are preserved to the best possible extent.
However, we leave this broader study for a future paper.


\newpage
\section{Appendix: Principal Symbols}
Theorem~\ref{theorem:layer-psido} is implicit in the literature.
It is well-known that all the layer potential operators are classical pseudo-differential, 
but there seems to be a lack of clarity about their orders and precise principal symbols.
Sometimes, the double layer operator is understood as a limit from the exterior to $\Gamma$,
which is then $\frac{1}{2}I+D_k$, because of the jump relations, and obviously not of order $-1$.
We have therefore decided to include a proof of the correct orders here.

\begin{proof}[Proof of Theorem~\ref{theorem:layer-psido}]
Let $\mathbb{D}$ be the closed unit disc in $\mathbb{R}^2$. Fix $(x, \xi) \in T^* \Gamma \setminus 0$.
Pick a conformal parametrization $\tau : \mathbb{D} \to \Gamma$ with $\tau(0) = x$ and conformal factor $\mu^2$, 
and let $\varphi = \xi^\sharp \cdot \tau^{-1}$ be a phase function with $d\varphi(x)=\xi$ on the interior set of $\tau(\mathbb{D})$.
It is tacitly extended using a cutoff $\chi \in C^\infty_0(\tau(\mathbb{D}^\circ))$ that is $1$ in a neighborhood of $x$, 
which is defined by $\chi = \chi_0 \circ \tau^{-1}$
for a symmetric $\chi_0 \in C^\infty_0(\mathbb{D}^\circ)$ equal to $1$ near $0$.
Working in this setup, we can directly compute
\begin{align*}
e^{-i \lambda \varphi(x)} S_k( e^{i \lambda \varphi} \chi )(x)
&=
\int_\Gamma \Phi_k(x,y) e^{i \lambda \varphi(y)} \chi(y) \, d\sigma(y) \\
&=
\int_{\mathbb{D}} \Phi_k(x,\tau(u)) e^{i \lambda \xi \cdot u} \chi_0(u) \mu(u)^2\,  du \\
&=
\frac{1}{4\pi}\mu(0)^2 \int_{\mathbb{R}^2} \frac{1}{|D\tau(0)u|} e^{i \lambda \xi \cdot u} \chi_0(u) \,  du + O_{\lambda \to \infty}(\lambda^{-2}) \\
&=
\frac{1}{4\pi}\mu(0) \int_{\mathbb{R}^2} \frac{1}{|u|} e^{i \lambda \xi \cdot u}\chi_0(u) \, du + O_{\lambda \to \infty}(\lambda^{-2}),
\end{align*}
and therefore $S_k \in \Psi^{-1}(\Gamma)$ with
\begin{align*}
\sigma^{-1}(S_k)(x,\xi) 
&=  
\frac{1}{4\pi}\mu(0)\lim_{\lambda \to \infty}  \int_{\mathbb{R}^2} \frac{1}{|u|} e^{i \xi \cdot u}\chi_0\Big(\frac{u}{\lambda}\Big) \, du \\
&=
\mu(0)\frac{1}{2|\xi|} \\
&=
\frac{1}{2|\xi|_g},
\end{align*}
where we have replaced the kernel by its singular part, the remainder is zero at $u=0$.
In fact, it is of type $O_{u\to 0}(|u|)$, which even gives a $O_{\lambda \to \infty}(\lambda^{-3})$ term by homogeneity.
Writing $b_{\alpha, \beta} = -\partial_\alpha \tau(0) \cdot \partial_\beta \nu(0)$, a similar calculation shows that
\begin{align*}
e^{-i \lambda \varphi(x_0)} D_k( e^{i \lambda \varphi} \chi )(x)
&=
\int_{\mathbb{D}} \nu(\tau(u))\cdot \nabla_y\Phi_k(x,\tau(u)) e^{i \lambda \xi\cdot u} \chi_0(u) \mu(u)^2 \, du \\
&=
\frac{1}{4\pi\mu(0)}\sum_{\alpha,\beta=1}^2 b_{\alpha,\beta}\int_{\mathbb{R}^2} \frac{u^\alpha u^\beta}{|u|^3}  e^{i \lambda \xi \cdot u} \chi_0(u) \, du + O_{\lambda \to \infty}(\lambda^{-2}),
\end{align*}
and so $D_k \in \Psi^{-1}(\Gamma)$ with
\begin{align*}
\sigma^{-1}(D_k)(x,\xi) 
&=
\frac{1}{\mu(0)}\sum_{\alpha,\beta=1}^2 b_{\alpha,\beta} \frac{1}{2|\xi|}\Big( \delta^{\alpha}_\beta - \frac{\xi_\alpha \xi_\beta}{|\xi|^2} \Big) \\
&=
\frac{1}{2|\xi|_g}\Big( 2H - \mathrm{II}\Big(\frac{\xi^\sharp}{|\xi|_g}, \frac{\xi^\sharp}{|\xi|_g}\Big) \Big),
\end{align*}
where $\mathrm{II}$ is the second fundamental form, and $H$ is the mean curvature.

\newpage
Of course, the transpose $D_k^t$ has the same principal symbol $\sigma^{-1}(D_k^t)=\sigma^{-1}(D_k)$.
It is an elementary fact that the transposed operator has the same order as the orginal.
The magnetic operator $M_k$ is vectorial, and requires a slightly different computation.
Using the vector triple identity, we have for any $a\in C^\infty(\Gamma; T_\mathbb{C}\Gamma)$ that
\begin{align*}
    M_ka(x) = \int_\Gamma \nabla_x \Phi_k(x,y) (\nu(x) \cdot a(y)) \, d\sigma(y) - D_k^t a(x),
\end{align*}
where $D_k^t$ obviously acts component-wise, and we denote the former operator by $M_k'$.
Note that $\nu(x) \cdot a(y) = O_{y\to x}(|x-y|)$ ensures the integrand is only weakly singular.
It is therefore only necessary to get the asymptotics for this slightly simpler operator. 
In order to do this, we take $v\in T_\mathbb{C} \Gamma$ and define for $y\in \Gamma$ the tangential field
\begin{align*}
  a(y) =  -\nu(y) \times (\nu(x) \times v) \chi(y), 
\end{align*}
which is smooth, compactly supported in $\tau(\mathbb{D}^\circ)$, and has the property that $a(y) = v$.
Again, we proceed similarly to get
\begin{align*}
e^{-i \lambda \varphi(x)} M'_k( e^{i \lambda \varphi} a )(x)
&=  
\int_{\Gamma} \nabla_x \Phi_k(x,y) (- \nu(y)\cdot v) e^{i \lambda \varphi} \chi(y) \, d\sigma(y) \\
&=
\int_{\mathbb{D}} \nabla_x \Phi_k(x,\tau(u)) (- \nu(\tau(u))\cdot v) e^{i \lambda \xi\cdot x} \chi_0(u) \mu(u)^2  \ du \\
&=
m'_k(\lambda)(x,\xi)  + O_{\lambda \to \infty}(\lambda^{-2}),
\end{align*}
where
\begin{align*}
m'_k(\lambda)(x,\xi) 
&=
\frac{1}{4\pi} \mu(0)^2 \int_{\mathbb{R}^2} \frac{D\tau(0)u}{|D\tau(0)u|^3} \sum_{\beta=1}^2 u^\beta(- \partial_\beta\nu(0)\cdot v) e^{i \lambda \xi\cdot x} \chi_0(u) \, du  \\
&=
\frac{1}{4\pi \mu(0)} \sum_{\gamma=1}^2 \partial_{\gamma} \tau(0) \sum_{\alpha,\beta=1}^2 b_{\alpha,\beta} v^\alpha\int_{\mathbb{R}^2} \frac{u^\gamma u^\beta}{|u|^3}  e^{i \lambda \xi\cdot x} \chi_0(u) \, du,
\end{align*}
and this confirms the asymptotic corresponding to order $-1$, so $M_k \in \Psi^{-1}(\Gamma; T_\mathbb{C}\Gamma)$.
The first part of the principal symbol is then
\begin{align*}
\lim_{\lambda \to \infty} \lambda \,m'_k(\lambda)(x,\xi)
&=
\frac{1}{ \mu(0)}\sum_{\gamma=1}^2 \partial_{\gamma} \tau(0)\sum_{\alpha,\beta=1}^2 b_{\alpha,\beta} v^\alpha \frac{1}{2|\xi|}\Big( \delta^{\beta}_\gamma - \frac{\xi_\beta \xi_\gamma}{|\xi|^2} \Big) \\
&=
\frac{1}{2|\xi|_g} \Big( \mathcal{W}v - \mathrm{II}\Big(v, \frac{\xi^\sharp}{|\xi|_g}\Big) \frac{\xi^\sharp}{|\xi|_g}\Big),
\end{align*}
where $\mathcal{W} : T_\mathbb{C}\Gamma \to T_\mathbb{C}\Gamma$ is the Weingarten operator, a geometric vector bundle map.
Combining this with the previous calculation, we get
\begin{align*}
    \sigma^{-1}(M_k)(x,\xi)v 
    =
    \frac{1}{2|\xi|_g} \Big( (\mathcal{W}- 2H)v +  \mathrm{II}\Big(\frac{\xi^\sharp}{|\xi|_g}, \frac{\xi^\sharp}{|\xi|_g}\Big) v - \mathrm{II}\Big(\frac{\xi^\sharp}{|\xi|_g}, v\Big) \frac{\xi^\sharp}{|\xi|_g}\Big).
\end{align*}
The checks for the hyper-singular operator $W_k$, and $W_k-W_0$, are similar in spirit.
We leave it to the reader
\end{proof}

\newpage

\bibliographystyle{siamplain}
\bibliography{references}
\end{document}